\numberwithin{equation}{section}
\newtheorem{theorem}{Theorem}[section]
\newtheorem{lemma}[theorem]{Lemma}
\theoremstyle{definition}
\newtheorem{remark}[theorem]{Remark}
\newcommand{\R}{\mathbb{R}}
\newcommand{\ds}{\displaystyle}
\begin{document}

\title
[Proportional  positive solutions and least energy solutions ] { Existence, non-degeneracy of proportional  positive solutions and least energy solutions  for a fractional  elliptic system}

 \author{QiHan  He,\,\, Shuangjie Peng \,\,and \,\,Yan-Fang Peng }

\address{Department of Mathematics and Information Science, Guangxi University, Nanning, 530003, P. R. China }

\email{ heqihan277@163.com }

\address{School of Mathematics and Statistics, Central China
Normal University, Wuhan, 430079, P. R. China}

\email{ sjpeng@mail.ccnu.edu.cn}

\address{School of Mathematics, Guizhou Normal University, Guiyang, 550001, P. R. China}
\email{ pyfang2005@sina.com }

\maketitle
\begin{abstract}
In this paper, we study the following fractional  nonlinear Schr\"{o}dinger system
$$
\left\{%
\begin{array}{ll}
(-\Delta)^s u +u=\mu_1 |u|^{2p-2}u+\beta |v|^p|u|^{p-2}u,~~x\in \R^N,\vspace{2mm}\\
(-\Delta)^s v +v=\mu_2 |v|^{2p-2}v+\beta |u|^p|v|^{p-2}v,~~x\in \R^N,
\end{array}%
\right.
$$
where  $0<s<1, \mu_1 >0, \mu_2>0, 1<p<2_s^*/2, 2_s^*=+\infty$ for $N\le 2s$ and $2_s^*=2N/(N-2s)$ for $N>2s$, and $\beta \in \R$ is a coupling constant. We investigate
the existence and non-degeneracy of proportional positive vector solutions  for the above system  in some ranges of  $\mu_1,\mu_2, p, \beta$.
We also prove that the least energy vector solutions  must be  proportional and unique under some additional assumptions.
\end{abstract}
\section{Introduction}

In this paper,  we consider the following fractional   Schr\"{o}dinger system
\begin{equation}\label{1.1}
\left\{%
\begin{array}{lll}
(-\Delta)^s u +u=\mu_1 |u|^{2p-2}u+\beta |v|^p|u|^{p-2}u,~~x\in \R^N,\vspace{2mm}\\
(-\Delta)^s v +v=\mu_2 |v|^{2p-2}v+\beta |u|^p|v|^{p-2}v,~~x\in \R^N,\vspace{2mm}\\
u,v\in H^s(\R^N),
\end{array}%
\right.
\end{equation}
where $0<s<1, \mu_1 >0, \mu_2>0, 1<p<2_s^*/2,$ $2_s^*=+\infty$ for $ N \leq 2s$ and $2_s^*=2N/(N-2s)$ for $ N>2s$, $\beta \in \R$ and
$$
H^s(\R^N):=\Big\{u\in L^2(\R^N):\ds\int_{\R^N}(1+|\xi|^{2s})|\hat{u}|^2~d\xi<+\infty\Big\},
$$
where  $\hat{}$  denotes the
Fourier transform, i.e. $\hat{\phi}(\xi)=\frac{1}{(2\pi)^{N/2}}\int_{\R^N}e^{-2\pi i\xi\cdot x}\phi(x)dx$, $(-\Delta)^s$  of a function $\phi\in S$ is defined by
$$
\widehat{(-\Delta)^s\phi(\xi)} = |\xi|^{2s}\hat{\phi}(\xi).
$$
Here $S$ denotes the Schwartz space of rapidly decreasing $C^\infty$ functions in $\R^N$. In fact, since $S$ is density in $L^2(\R^N)$, $(-\Delta)^s$  can act on $H^s(\R^N).$  If $\phi$ is smooth enough, it can be expressed
by the following formula
$$
(-\Delta)^s\phi(x)=C_{N,s} P.V.\int_{\R^N}\frac{\phi(x)-\phi(y)}{|x-y|^{N+2s}}dy=\ds C_{N,s} \lim_{\varepsilon\rightarrow 0}\int_{\R^N\setminus B_\varepsilon(x)}\frac{\phi(x)-\phi(y)}{|x-y|^{N+2s}}dy,
$$
where $P.V.$ is the principal value and $C_{N,s}$ is a normalization constant.

This type of  fractional  Schr\"{o}dinger systems are of
particular interest in fractional quantum mechanics for the study of
particles on stochastic fields modelled by L\'{e}vy processes. A
path integral over the L\'{e}vy flights paths and a fractional
Schr\"{o}dinger equation of fractional quantum mechanics are
formulated by Laskin \cite{l1} from the idea of Feynman and Hibbs's
path integrals (see also \cite{l2}).

Problem \eqref{1.1} can be regarded  as a counterpart of the following fractional  equation
 \begin{equation}\label{1.3}
 (-\Delta)^su+u=|u|^{2p-2}u,~~~x\in \R^N.
\end{equation}
When $s=1$, \eqref{1.3} turns to be the classical equation
\begin{equation}\label{1.4}
-\Delta u+u=|u|^{2p-2}u,
\end{equation}
where $1<p<2_1^*/2$.  In \cite{c1}, Coffman showed the   uniqueness of ground state solutions of the following equation
$$
-\Delta u+u-u^3=0,\,x\in \R^3.
$$
For a general case,    the uniqueness of  positive radial solutions of
$$
\Delta u+f(u)=0,\,x\in \R^N,
$$
was obtained by Maris in \cite{m}  when  $N>1$ and $f(u)$ satisfies certain  assumptions.
In a celebrated paper \cite{k}, Kwong established
the uniqueness and non-degeneracy of the ground states for problem \eqref{1.4} for $ N\geq1$, which provides
an indispensable basis for the blow-up analysis as well as the stability of solitary waves for
related time-dependent equations such as the nonlinear Schr\"{o}dinger equation  (see
 \cite{mr}).

For \eqref{1.3} with $0<s<1$, the uniqueness of ground state solutions for the following  nonlinear model
$$
(-\Delta)^\frac{1}{2} u+u-u^2=0,\,x\in \R,
$$
was proved by  Amick and Toland \cite{at}.
In \cite{fls}, Frank, Lenzmann and Silvester  showed the  uniqueness and
non-degeneracy of ground state solutions $w$
for arbitrary space dimensions $N\geq 1$ and all admissible exponents $1<p<2^*_s/2$, where
the non-degeneracy means that the kernel of the associated linearized operator in $H^s(\R^N)$
$$
L_+=(-\Delta)^s+I-(2p-1)w^{2p-2}
$$
is exactly $\hbox{span}\{\frac{\partial w}{\partial x_i}:~i=1,2,\cdots,N\}$.
This result  generalizes
the uniqueness and  non-degeneracy  result for dimension $N = 1$  obtained in \cite{fl} and in particular, the uniqueness result in \cite{at}.
 The existence and symmetry results for the solution  $w$  for equation \eqref{1.3}  were also shown   by  Dipierro, Palatucci and Valdinoci
in \cite{dpv} and  Felmer, Quaas and Tan in \cite{fqt}. Recently, for a critical semi-linear nonlocal equation involving the fractional Laplacian,
 D\'{a}vila, del Pino and   Sire \cite{dds}
proved the non-degeneracy of the manifold consisting of positive solutions.

 Since the important  result  of \cite{fls}, people began to
focus on the generalized form of \eqref{1.3}.
Based on minimization on the Nehari manifold,  Secchi \cite{s} found   solutions for  the following  class of fractional nonlinear Schr\"{o}dinger equations
 \begin{equation}(-\Delta )^su +V(x)u=|u|^{2p-2}u.
 \end{equation}
 Felmer, Quaas and Tan \cite{fqt} studied  the existence of positive solutions
for the fractional nonlinear Schr\"{o}dinger equation
\begin{equation}
(-\Delta)^su+u=f(x,u) ~\hbox{in}~\R^N, u>0, \lim\limits_{|x|\rightarrow +\infty}u(x)=0,
\end{equation}
and  analyzed regularity, decay and symmetry properties of
these solutions. In   \cite{c}, Chang obtained the existence
of ground state solutions for the following   fractional Schr\"{o}dinger equation
$$
(-\Delta)^su + V(x)u = f(x,u), \,x\in \R^N,
$$
 by means of   variational methods,   where $f(x, u)$ is asymptotically linear in $u $ at
infinity. For more results concerning the fractional equations and the related problems, we can refer to
\cite{bl,bs,k,mr,w} and the references therein.

We emphasize  that   although there is  wide  study on  existence, uniqueness  and non-degeneracy
for single fractional  equation, to our knowledge,  there are few papers dealing with fractional  system, with the exception of  \cite{dp},
where Dipierro and Pinamonti studied  the symmetry properties of solutions of elliptic system
\begin{equation}\label{1.6}
\left\{%
\begin{array}{ll}
(-\Delta)^{s_1} u =F_1(u,v),~~x\in \R^N,\vspace{2mm}\\
(-\Delta)^{s_2} v =F_2 (u,v),~~x\in \R^N.
\end{array}%
\right.
\end{equation}
Here $F_1,F_2 \in C^{1,1}_{loc}(\R^2), s_1, s_2 \in (0,1)$. As far as  \eqref{1.6} is concerned, we can find many results  for the case $s_1=s_2=1$
(see \cite{a,bdw,clw,lw2,pw,p} and the references therein).

In the present paper, we will focus on the existence, non-degeneracy of proportional vector solutions for fractional system \eqref{1.1}, and will investigate the form and the uniqueness  of the least energy vector solutions of \eqref{1.1}. More precisely,   our first goal is to prove an existence and  non-degeneracy result for proportional   positive solutions of \eqref{1.1}, where non-degeneracy of a solution $(U,~V)$ for \eqref{1.1}  means that the kernel of the linearized operator of \eqref{1.1} at $(U,~V)$ is given by span$\{(\theta(\beta)\frac{\partial w}
{\partial x_j},\frac{\partial w}
{\partial x_j})~|~j=1,2, \cdots, N\}$ with $\theta(\beta) \neq 0$.
 Non-degeneracy is very important  because it enables one to construct   solutions for many problems, see \cite{cwy,den,dy,pw,wy} for example. Our  second goal is to show that the least energy solutions of \eqref{1.1} must be proportional and unique. A similar result for the case $s=1$ has been proved  by Chen and Zou \cite{cz}.

 Before we state our main results,
 we introduce some notations. We call $(u,v)$  a least energy solution of \eqref{1.1} if
 $u \not\equiv 0, v\not\equiv 0$ satisfy \eqref{1.1} and $(u,v)$ makes the value of the corresponding functional the smallest among all the solutions of \eqref{1.1}.
Throughout this paper,  we denote by $w$ the solution, found by Frank, Lenzmann and Silvester in \cite{fls}, for the equation \eqref{1.3}. Without loss of generality, we assume that $\mu_1>\mu_2$.
 Set
 \begin{equation}\label{S}
S:=\inf\limits_{u\in H^s(\R^N)\setminus\{0\}}\Big\{\ds\int_{\R^N}(1+|\xi|^{2s})|\hat{u}|^2:\,\ds\int_{\R^N}|u|^{2p}=1\Big\},
\end{equation}
\begin{equation}\label{Smu1mu2}
S_{\mu_1,\mu_2}:=\inf\limits_{(u, v)\in H^s(\R^N)^2\setminus\{0,0\}}\frac{\ds\int_{\R^N}(1+|\xi|^{2s})(|\hat{u}|^2+|\hat{v}|^2)}
{\Big(\ds\int_{\R^N}(\mu_1|u|^{2p}+2\beta|u|^p|v|^p+\mu_2|v|^{2p})\Big)^\frac{1}{p}}.
\end{equation}
We define the following functions which is important in the analysis of the uniqueness and non-degeneracy of the least energy solutions.
\begin{equation}\label{ftau}
f(\tau):=\frac{1+\tau^2}{(\mu_1+2\beta\tau^p+\mu_2\tau^{2p})^\frac{1}{p}},
\end{equation}
$$ H_1(t)=\frac{1}{p-1}t^2-\frac{p-2}{p}(p-1)
^{-\frac{p}{p-2}}t^\frac{2(p-1)}{p-2}.$$

We also write
\begin{equation}\label{1.7}
D=\left\{%
\begin{array}{ll}
(0,1),\quad\quad~ \hbox{if}~1<p<2 ~\hbox{and}~(p-1)\mu_1^\frac{p-2}{2(p-1)}\mu_2^\frac{p}{2(p-1)}>\beta>0,\vspace{2mm}\\
(1,+\infty),\quad\quad~ \hbox{if}~p>2~\hbox{and}~\beta>(p-1)\mu_1,\vspace{2mm}\\
(0,+\infty),\quad\quad~ \hbox{if}~\hbox{otherwise},
\end{array}%
\right.
\end{equation}

\begin{equation}\label{1.7'}
\tilde{D}=\left\{%
\begin{array}{ll}
(0,1),\quad\quad~ \hbox{if}~1<p<2 ~\hbox{and}~(p-1)\mu^\frac{p-2}{2(p-1)}>\tilde{\beta}>0,\vspace{2mm}\\
(1,+\infty),\quad\quad~ \hbox{if}~p>2~\hbox{and}~\tilde{\beta}>(p-1)\mu,\vspace{2mm}\\
(0,+\infty),\quad\quad~ \hbox{if}~\hbox{otherwise},
\end{array}%
\right.
\end{equation}

 and
\begin{equation}\label{1.8}\left\{%
\begin{array}{ll}
(A_1)~ 2<p<\frac{2^*_s}{2},~ 0<\beta\leq(p-1)\mu_1,\vspace{2mm}\\
(A_2)~2<p<\frac{2^*_s}{2},~\mu_1 \geq \frac{\mu_2}{2}(\frac{p}{p-1})^{p-1},~\beta >(p-1)\mu_1,\vspace{2mm}\\
(A_3)~2<p<\frac{2^*_s}{2},~\mu_1<\frac{\mu_2}{2}(\frac{p}{p-1})^{p-1}, ~(p-1)\mu_1\leq\beta\leq\beta_0~
\hbox{or}~\beta\geq\max\{\beta_1,(p-1)\mu_1\},\vspace{2mm}\\

(A_4)~1<p<\min\{2,\frac{2^*_s}{2}\},~ \beta\geq (p-1)\mu_1^\frac{p-2}{2(p-1)}\mu_2^\frac{p}{2(p-1)},\vspace{2mm}\\
(A_5)1<p<\min\{2,\frac{2^*_s}{2}\},~0<\mu_1<\frac{p\mu_2}{2-p},\vspace{2mm}\\
 \quad\quad0<\beta\leq \min \Big\{\frac{p\mu_2-\mu_1(2-p)}{2}, \,\,
2(p-1)(2-p)^\frac{2-p}{2(p-1)}(\frac{1}{p})^\frac{p}{2(p-1)}\mu_1^\frac{p}{2(p-1)}\mu_2^\frac{p-2}{2(p-1)}\Big\},\vspace{2mm}\\
(A_6)~1<p<\min\{2,\frac{2^*_s}{2}\},\max\Big\{\frac{p\mu_2-\mu_1(2-p)}{2},
~0\Big\}<\beta<(p-1)\mu_1^\frac{p-2}{2(p-1)}\mu_2^\frac{p}{2(p-1)},\vspace{2mm}\\
(A_7) ~p=2, ~\beta \in (0, \mu_2)\cup(\mu_1, +\infty),
\end{array}%
\right.
\end{equation}
where $ 0<\beta_0<\beta_1$ solve
$$ \frac{2(p-1)\mu_1}{p\mu_2}- H_1\Big(\frac{\beta}{\mu_2}\Big)=0.$$

Our first  result is on  the existence and  non-degeneracy  of positive proportional vector solutions.

\begin{theorem}\label{Th1}~ Suppose that $0<s<1, 1<p<2^*_s/2, \mu_1>\mu_2>0, \beta>0$ and one of $A_i$ ($i=1,2,\cdots,7$) defined in \eqref{1.8} holds. Then \eqref{1.1} has  a  positive solution
$(U,V):=(k_1w, \tau_0 k_1w) $ in $H^s(\R^N) \times H^s(\R^N)$ which is non-degenerate,
 where $\tau_0\in D$ satisfies $\mu_1+\beta\tau^p-\mu_2\tau^{2p-2}-\beta\tau^{p-2}=0$  and $k_1^{2p-2}=(\mu_1+\beta\tau_0^p)^{-1}$.
\end{theorem}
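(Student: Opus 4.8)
The plan is to obtain the solution by substituting the proportional ansatz into \eqref{1.1}, and then to establish non-degeneracy by diagonalizing the coupling of the linearized operator. \emph{Existence.} Putting $U=k_1w$, $V=\tau_0 k_1 w$ and using $(-\Delta)^s w+w=w^{2p-1}$, the first line of \eqref{1.1} reduces to $k_1^{2p-2}(\mu_1+\beta\tau_0^p)=1$, and, after inserting this, the second line collapses to the single scalar identity
\[
g(\tau_0):=\mu_1+\beta\tau_0^p-\mu_2\tau_0^{2p-2}-\beta\tau_0^{p-2}=0 .
\]
A short computation shows that, for $\tau>0$, $f'(\tau)=0$ if and only if $g(\tau)=0$, so proportional solutions correspond exactly to critical points of the profile $f$ in \eqref{ftau}, and $w>0$ makes $(U,V)$ positive automatically. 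It then remains to place a root in $D$. I would exploit the boundary behaviour ($g(0^+)=\mu_1>0$ when $p>2$, $g(0^+)=-\infty$ when $1<p<2$, and $g(+\infty)=-\infty$) together with the pivotal value $g(1)=\mu_1-\mu_2>0$, and run a sign-and-monotonicity analysis of $g$ in each of the seven regimes $A_1,\dots,A_7$; the interval $D$ in \eqref{1.7} is tuned precisely so that a root $\tau_0$ falls inside it.

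\emph{Linearization and eigenstructure.} Since $U,V>0$, the nonlinearity is smooth at $(U,V)$ and the linearized system reads $\mathcal L(\phi,\psi)=0$, where $\mathcal L$ acts as $((-\Delta)^s+I)$ on each component minus multiplication by $w^{2p-2}M$, with $M$ an explicit constant symmetric $2\times2$ matrix assembled from $\mu_1,\mu_2,\beta,\tau_0$ and $k_1^{2p-2}=(\mu_1+\beta\tau_0^p)^{-1}$. Using $g(\tau_0)=0$ one checks that $(1,\tau_0)$ is an eigenvector of $M$ with eigenvalue $2p-1$; the orthogonal eigenvector $(\tau_0,-1)$ then carries
\[
\lambda_2=\frac{(2p-1)\mu_1+(p-1)\beta\tau_0^p-p\beta\tau_0^{p-2}}{\mu_1+\beta\tau_0^p}.
\]
Diagonalizing $M$ decouples $\mathcal L$, in the rotated basis, into the two scalar operators $L_{2p-1}=L_+$ and $L_{\lambda_2}$, where $L_\lambda:=(-\Delta)^s+I-\lambda w^{2p-2}$.

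\emph{Non-degeneracy.} The $(1,\tau_0)$-direction contributes $\ker L_+=\operatorname{span}\{\partial_{x_j}w\}$ by \cite{fls}, yielding the kernel vectors $(\partial_{x_j}w,\tau_0\partial_{x_j}w)$, i.e.\ exactly $\operatorname{span}\{(\theta(\beta)\partial_{x_j}w,\partial_{x_j}w)\}$ with $\theta(\beta)=\tau_0^{-1}\neq0$. The crux is to prove $\ker L_{\lambda_2}=\{0\}$. I would first record that $w>0$ lies in $\ker L_1$, so $0$ is the bottom of the spectrum of $L_1$, giving $L_1\geq0$ and $\ker L_1=\operatorname{span}\{w\}$, while $\ker L_{2p-1}=\operatorname{span}\{\partial_{x_j}w\}$ with Morse index one. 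Decomposing into spherical-harmonic sectors and using that each eigenvalue of $L_\lambda$ is strictly decreasing in $\lambda$ (since $\partial_\lambda L_\lambda=-w^{2p-2}<0$), I would show $\ker L_\lambda=\{0\}$ for every $\lambda<2p-1$ with $\lambda\neq1$: the radial zero occurs only at $\lambda=1$, the zero in the sector $\ell=1$ (where the radial part $w'(r)<0$ of $\partial_{x_j}w$ has no sign change) only at $\lambda=2p-1$, and the higher sectors stay positive by the centrifugal ordering. A one-line computation then gives $\lambda_2-(2p-1)=-p\beta(\tau_0^p+\tau_0^{p-2})/(\mu_1+\beta\tau_0^p)<0$, so $\lambda_2<2p-1$ always, and, using $g(\tau_0)=0$, one verifies the identity $\lambda_2=1\iff g'(\tau_0)=0$. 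Hence non-degeneracy is equivalent to $\tau_0$ being a simple zero of $g$.

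\emph{Main obstacle.} I expect the weight to fall on two points. The harder is the spectral step $\ker L_{\lambda_2}=\{0\}$: transplanting the Frank--Lenzmann--Silvester non-degeneracy together with the Morse-index-one information into the nonlocal setting, sector by sector and monotonically in $\lambda$, must replace the Sturm--Liouville shooting arguments that are unavailable for $(-\Delta)^s$. The second is the bookkeeping behind existence: checking that each of $A_1,\dots,A_7$ simultaneously forces a root $\tau_0\in D$ and keeps it simple (equivalently $\lambda_2\neq1$) is a long but elementary discussion of $g$, into which the auxiliary function $H_1$ and the thresholds $\beta_0<\beta_1$ enter to count and separate the roots.
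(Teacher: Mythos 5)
Your proposal follows essentially the same route as the paper: proportional ansatz reducing existence to a root $\tau_0\in D$ of $g$ (the paper's Lemma~\ref{lm2.3}), then diagonalization of the linearized coupling matrix into the two scalar operators $L_{2p-1}$ and $L_{\lambda_2}$, with your $\lambda_2$ coinciding (after the rescaling $\mu=\mu_1/\mu_2$, $\tilde\beta=\beta/\mu_2$) with the paper's $\tilde f(\tilde\beta)=b\gamma_++c$, your bound $\lambda_2<2p-1$ being the paper's Claim~II, and your $\theta(\beta)=\tau_0^{-1}$ agreeing with their kernel formula. Your identity ``$\lambda_2=1\iff g'(\tau_0)=0$'' is correct and is in fact exactly the paper's Claim~I in disguise: the first equation of their system \eqref{2.1} is precisely $g'(\tau_0)=0$ rewritten using $g(\tau_0)=0$, so ruling out $\lambda_2=1$ is the same as ruling out a solution of \eqref{2.1} in $\tilde D$. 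The one substantive piece you defer as ``elementary bookkeeping'' is the verification that each of $(A_1)$--$(A_7)$ actually forces $g'(\tau_0)\neq0$; this is the paper's Lemma~\ref{lm2.1}, a genuinely delicate case analysis (it is where $H_1$, $\beta_0$, $\beta_1$ and the auxiliary functions $F,G$ earn their keep), and the theorem is not proved until it is carried out. Also note that the spectral input you propose to re-derive sector by sector ($\ker L_\lambda=\{0\}$ for $\lambda<2p-1$, $\lambda\neq1$) is simply quoted from \cite{fls} in the paper via the eigenvalue list $1=\lambda_1<\lambda_2=\cdots=\lambda_{N+1}=2p-1<\lambda_{N+2}\le\cdots$, so no new argument is needed there.
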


\begin{remark}
Indeed, we prove that any solution of the form $(k_1w, \tau_0 k_1w) $ in $H^s(\R^N) \times H^s(\R^N)$ is non-degenerate,
 where $\tau_0\in D$ satisfies $\mu_1+\beta\tau^p-\mu_2\tau^{2p-2}-\beta\tau^{p-2}=0$  and $k_1^{2p-2}=(\mu_1+\beta\tau_0^p)^{-1}$.
\end{remark}

\begin{remark}
When $p=2, \beta \in [\mu_2, \mu_1]$, \eqref{1.1} has no positive solutions. In fact, suppose, to the contrary,  that
$(u,v)$ is a positive solution of \eqref{1.1}. Multiplying the first equation
in \eqref{1.1} by $v$ and the second equation by $u$, and then integrating  on $\R^N$ and
substracting together, we obtain $0=\int_{\R^N}uv[(\mu_1-\beta)u^2+(\beta-\mu_2)v^2]~dx\neq 0$, which is a contradiction.
\end{remark}

Our second result is for the case $\beta<0$.

\begin{theorem}\label{Th1.4}
Suppose that $0<s<1, 1<p<2^*_s/2, \mu_1>\mu_2>0$. Then   there exists a decreasing  sequence $\{\beta_k\} \subset (-\sqrt{\mu_1\mu_2},0)$
such that  for $\beta \in (-\sqrt{\mu_1\mu_2},0)\setminus\{\beta_k\}$,
 \eqref{1.1} has a  positive non-degenerate solution
$(U,V):=(k_1w, \tau_0 k_1w)$ in $H^s(\R^N) \times H^s(\R^N)$,  where $\tau_0\in D$ satisfies $\mu_1+\beta\tau^p-\mu_2\tau^{2p-2}-\beta\tau^{p-2}=0, k_1^{2p-2}=(\mu_1+\beta\tau_0^p)^{-1}$.
\end{theorem}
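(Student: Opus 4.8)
The plan is to exploit the proportional ansatz $(U,V)=(k_1 w,\tau_0 k_1 w)$, which collapses the system onto the scalar ground-state equation \eqref{1.3} for $w$ together with two scalar identities. Substituting and using $(-\Delta)^s w+w=w^{2p-1}$, the first equation forces $k_1^{2p-2}=(\mu_1+\beta\tau_0^p)^{-1}$ and the second reduces to the algebraic constraint
\[
h(\tau_0):=\mu_1+\beta\tau_0^p-\mu_2\tau_0^{2p-2}-\beta\tau_0^{p-2}=0 ,
\]
which a direct computation identifies with $f'(\tau_0)=0$ for the function $f$ in \eqref{ftau}. The reason for restricting $\beta$ to $(-\sqrt{\mu_1\mu_2},0)$ is that then the quadratic $\mu_2 X^2+2\beta X+\mu_1$ in $X=\tau^p$ has negative discriminant, so the denominator of $f$ stays strictly positive on all of $(0,\infty)$; in particular $D=(0,\infty)$ in this regime.

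For existence I would locate $\tau_0$ on the interval $I_\beta=\big((|\beta|/\mu_2)^{1/p},(\mu_1/|\beta|)^{1/p}\big)$, nonempty precisely because $\beta^2<\mu_1\mu_2$. Writing $a:=\mu_1+\beta\tau^p=\mu_2\tau^{2p-2}+\beta\tau^{p-2}$ at a zero of $h$, one checks $h>0$ at the left endpoint (where $\mu_2\tau^p+\beta=0$, so $h=\mu_1-\beta^2/\mu_2>0$) and $h<0$ at the right endpoint (where $\mu_1+\beta\tau^p=0$). The intermediate value theorem yields $\tau_0\in I_\beta$ with $a>0$, hence $k_1>0$ and a genuine positive solution $(k_1 w,\tau_0 k_1 w)$.

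For non-degeneracy I would linearize \eqref{1.1} at $(U,V)$. Inserting the ansatz and using the two identities, the linearization becomes $(-\Delta)^s+1$ acting componentwise minus $a^{-1}w^{2p-2}M$, where the symmetric coupling matrix is
\[
M=\begin{pmatrix}(2p-1)a-p\beta\tau_0^{p}&p\beta\tau_0^{p-1}\\ p\beta\tau_0^{p-1}&(2p-1)a-p\beta\tau_0^{p-2}\end{pmatrix}.
\]
A short calculation shows $(1,\tau_0)^{\top}$ is an eigenvector with eigenvalue $(2p-1)a$, while $\det M=(2p-1)a\big[(2p-1)a-p\beta(\tau_0^{p}+\tau_0^{p-2})\big]$, so the second eigenvalue is $\nu_-=(2p-1)a-p\beta(\tau_0^p+\tau_0^{p-2})$. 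Diagonalizing $M$ decouples the linearization into the scalar operators $(-\Delta)^s+1-\gamma_\pm w^{2p-2}$ with $\gamma_+=2p-1$ and $\gamma_-=\nu_-/a=(2p-1)-p\beta\tau_0^{p-2}(1+\tau_0^2)/a$. By the Frank–Lenzmann–Silvester result the $\gamma_+$ block is exactly $L_+$, contributing the expected kernel $\mathrm{span}\{(\partial_{x_j}w,\tau_0\partial_{x_j}w)\}$, i.e. $\theta(\beta)=\tau_0^{-1}\neq0$. Thus non-degeneracy of $(U,V)$ is equivalent to triviality of the kernel of the $\gamma_-$ block, i.e. to $\gamma_-$ not being an eigenvalue of the weighted problem $(-\Delta)^s\phi+\phi=\Lambda w^{2p-2}\phi$, whose spectrum is $\Lambda_1=1<\Lambda_2=\cdots=\Lambda_{N+1}=2p-1<\Lambda_{N+2}\le\cdots\to\infty$.

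The \emph{main obstacle}, and the only place the exceptional set enters, is the behaviour of $\gamma_-$ as a function of $\beta$. Since $\beta<0$ one has $\gamma_->2p-1$, so the $\gamma_-$ block never meets $\Lambda_1$ nor the translation level $2p-1$; degeneracy can occur only when $\gamma_-$ crosses some higher eigenvalue $\Lambda_j$, $j\ge N+2$. I would show $\gamma_-(\beta)\to 2p-1$ as $\beta\to0^-$ and $\gamma_-(\beta)\to+\infty$ as $\beta\to-\sqrt{\mu_1\mu_2}^+$ (because $a=\mu_1+\beta\tau_0^p\to0^+$, as $I_\beta$ shrinks to $\tau_*^p=\sqrt{\mu_1/\mu_2}$), so $\gamma_-$ sweeps the whole ray $(2p-1,\infty)$ and meets each $\Lambda_j$. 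The delicate step is guaranteeing these crossings are isolated and correctly ordered: this requires controlling the branch $\tau_0(\beta)$ — its uniqueness, or at least a real-analytic/monotone selection via the implicit function theorem, using that $h$ is strictly decreasing on $I_\beta$ for $1<p<2$ and analysing $h'$ for $p\ge2$ — and thence deducing monotonicity (or analyticity and non-constancy) of $\gamma_-$. Granting this, the bad set $\{\beta:\gamma_-(\beta)\in\{\Lambda_j\}_{j\ge N+2}\}$ is a decreasing sequence $\{\beta_k\}\subset(-\sqrt{\mu_1\mu_2},0)$ accumulating only at $-\sqrt{\mu_1\mu_2}$, and for every other $\beta$ the solution $(U,V)$ is non-degenerate.
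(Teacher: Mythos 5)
Your architecture is the same as the paper's: existence via the sign change of $g$ on $\bigl((|\beta|/\mu_2)^{1/p},(\mu_1/|\beta|)^{1/p}\bigr)$ is exactly Case II of Lemma~\ref{lm2.3}, and your diagonalization of the coupling matrix $M$ reproduces the paper's reduction. Indeed your $\gamma_-=(2p-1)-p\beta\tau_0^{p-2}(1+\tau_0^2)/a$ is, after the rescaling $\mu=\mu_1/\mu_2$, $\tilde{\beta}=\beta/\mu_2$, precisely the quantity $\tilde{f}(\tilde{\beta})=b\gamma_-+c=\frac{\mu(2p-1)+\tilde{\beta}(p-1)\tau_0^p-\tilde{\beta} p\tau_0^{p-2}}{\mu+\tilde{\beta}\tau_0^p}$ of \eqref{3.2}, and your observation that $\gamma_->2p-1$ when $\beta<0$ is the paper's reason why only the eigenvalues $\lambda_j$ with $j\ge N+2$ can produce a nontrivial kernel. (Your symmetric diagonalization along $(1,\tau_0)^{\top}$ and $(-\tau_0,1)^{\top}$ is in fact slightly cleaner than the paper's triangular reduction via $\varphi-\gamma_-\psi$, but it is the same computation.)

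The genuine gap is the step you label ``delicate'' and then grant: that $\gamma_-(\beta)$ is strictly monotone (or at least real-analytic and non-constant) on $(-\sqrt{\mu_1\mu_2},0)$, so that the bad set $\{\beta:\gamma_-(\beta)\in\{\lambda_j\}_{j\ge N+2}\}$ is a decreasing sequence. This is not a routine verification; it is the entire content of the paper's Lemma~\ref{npro3}, which differentiates the constraint $\mu+\tilde{\beta}\tau^p-\tau^{2p-2}-\tilde{\beta}\tau^{p-2}=0$ implicitly, computes $l'(\tilde{\beta})$ as an explicit rational expression whose numerator is $T(\tilde{\beta})=[\tilde{\beta}^2-\mu(p-1)]\tau^p-[\tilde{\beta}^2+\mu(p-1)]\tau^{p-2}+2\mu\tilde{\beta}$, and then must prove $T<0$ by a case analysis ($p\ge2$; $1<p<2$ with $\tilde{\beta}^2\le\mu(p-1)$; and $1<p<2$ with $\mu(p-1)<\tilde{\beta}^2<\mu$, the last case requiring a further monotonicity argument for $T$ itself). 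Without this computation your conclusion does not follow, and the alternative you mention (analyticity plus non-constancy) would also need to be carried out and in any case yields only a locally finite exceptional set rather than a monotone one. One reassurance on the point you flagged about selecting the branch $\tau_0(\beta)$: for $\beta<0$ one has $g'(\tau)=\tau^{p-3}\bigl[\beta(p\tau^2-(p-2))-2\mu_2(p-1)\tau^p\bigr]<0$ for all $\tau\ge1$ and all $p>1$ (since $p\tau^2-(p-2)\ge2$ there), and the constraint forces $\tau_0>1$; so the root is unique and depends real-analytically on $\beta$ by the implicit function theorem, with no case distinction between $1<p<2$ and $p\ge2$ needed for that part.
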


The last result describes  the form and the uniqueness  of the least energy solutions of \eqref{1.1}.
\begin{theorem}\label{th1.6}
Assume that $(u_0, v_0)$ is a positive  least energy solution of \eqref{1.1}, and one of the following conditions holds:

$(1)~~\beta>\mu_1, p=2$,

$(2)~~\beta>0, 1<p<2$.

Then $(u_0, v_0)=(k_{min}w_{x_0},  k_{min}\tau_{min}w_{x_0})$. Moreover,  the least energy solution is unique.

Here $x_0$ is some point in $\R^N$, $w_{x_0}(x):=w(x-x_0),$  $\tau_{min}>0$ is the minimum point of $f(\tau)$ in $[0, +\infty)$  satisfying  $f(\tau_{min}):=\min\limits_{\tau \geq0}f(\tau)<f(0)=\mu_1^{-\frac{1}{p}}$
with  $k_{min}^{2p-2}=(\mu_1+\beta \tau_{min}^p)^{-1}.$
\end{theorem}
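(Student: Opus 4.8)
The plan is to show that any positive least energy solution must be of the proportional form, and then that among proportional solutions there is a unique minimizer. First I would characterize the least energy level variationally: the energy functional associated to \eqref{1.1} on $H^s(\R^N)^2$ has the usual mountain-pass/Nehari structure, and the least energy solutions correspond to minimizers of the constrained quotient $S_{\mu_1,\mu_2}$ defined in \eqref{Smu1mu2}. The key idea is that $S_{\mu_1,\mu_2}$ can be computed by first rescaling each component: writing a candidate as a rescaling of $w$ and using the profile decomposition, one expects the optimal pair to be a scalar multiple of a single translate $w_{x_0}$ in each slot, with ratio $\tau$. This reduces the two-dimensional minimization to the one-variable problem of minimizing $f(\tau)$ in \eqref{ftau} over $\tau \geq 0$, so that
\begin{equation}\label{reduce}
S_{\mu_1,\mu_2} = S \cdot \min_{\tau \geq 0} f(\tau),
\end{equation}
with $S$ as in \eqref{S}. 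Establishing \eqref{reduce} is the analytic heart of the argument.

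To prove \eqref{reduce} I would argue in two directions. For the upper bound, plugging $(u,v)=(k_1 w_{x_0}, \tau k_1 w_{x_0})$ into the quotient immediately gives $S_{\mu_1,\mu_2} \leq S f(\tau)$ for every $\tau$, hence $S_{\mu_1,\mu_2}\leq S\min_\tau f(\tau)$. For the lower bound one needs to show a minimizing sequence cannot do better than a single-bump proportional profile; here the uniqueness and non-degeneracy of $w$ from \cite{fls}, together with the strict subcriticality $1<p<2^*_s/2$ guaranteeing compactness of minimizing sequences up to translations (concentration-compactness for the fractional Sobolev embedding), forces the minimizer to be a nonnegative multiple of a translate of $w$ in each component. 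The positivity assumption on $(u_0,v_0)$ rules out the degenerate cases where one component vanishes, so the ratio $v_0/u_0$ is a genuine positive constant $\tau$, and the Euler--Lagrange equations force $\tau$ to be a critical point of $f$ and the scalar factor to be $k_{min}$ with $k_{min}^{2p-2}=(\mu_1+\beta\tau^p)^{-1}$, matching the statement.

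The remaining task is to show that the minimizer $\tau_{min}$ of $f$ on $[0,\infty)$ is unique and satisfies $f(\tau_{min}) < f(0)=\mu_1^{-1/p}$ under each of the two hypotheses. I would compute $f'(\tau)$ and observe that critical points solve $\mu_1+\beta\tau^p-\mu_2\tau^{2p-2}-\beta\tau^{p-2}=0$, the same equation defining $\tau_0$ in the earlier theorems. Under condition $(1)$, $p=2$ and $\beta>\mu_1$, the function $f$ is explicit and a direct convexity/monotonicity analysis of the quartic-in-$\tau$ numerator-versus-denominator shows a single interior minimum strictly below $f(0)$; under condition $(2)$, $1<p<2$ with $\beta>0$, the sign analysis of $f'$ shows $f$ strictly decreases at $\tau=0$ (since the $\tau^{p-2}$ term dominates near $0$ when $p<2$, making $f'(0^+)<0$), so the infimum is attained at an interior point below $f(0)$, and one checks there is exactly one such critical point. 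The strict inequality $f(\tau_{min})<f(0)$ is what guarantees the least energy solution is genuinely coupled (both components nonzero) rather than a semi-trivial $(k_1w,0)$.

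The main obstacle I anticipate is the lower bound in \eqref{reduce}: proving that an arbitrary minimizing sequence for $S_{\mu_1,\mu_2}$ must collapse, after translation, onto a single proportional copy of $w$. This requires a careful concentration-compactness argument in $H^s(\R^N)$ adapted to the system, ruling out dichotomy (energy splitting between two distant bumps, possibly with different profiles in $u$ and $v$) and vanishing, and then invoking the classification of optimizers of \eqref{S}, which ties back to the uniqueness of $w$ from \cite{fls}. Once \eqref{reduce} and the fact that the optimal components are proportional multiples of the same translate of $w$ are in hand, the uniqueness of the least energy solution follows from the uniqueness of $\tau_{min}$ together with the translation invariance that fixes $x_0$ up to the choice of center.
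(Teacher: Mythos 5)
Your reduction to minimizing $f(\tau)$ and your upper bound $S_{\mu_1,\mu_2}\le S\min_{\tau\ge0}f(\tau)$ match the paper, but the two steps you yourself flag as the ``analytic heart'' are exactly where the proposal has genuine gaps. First, the lower bound for $S_{\mu_1,\mu_2}$ neither needs concentration-compactness nor follows from it: compactness of a minimizing sequence tells you a minimizer exists, not that its value is $S\min_{\tau\ge 0} f(\tau)$, and it certainly does not force each component to be a multiple of a translate of $w$ --- the Euler--Lagrange equation of a minimizer of \eqref{Smu1mu2} is the coupled system, so you cannot invoke the scalar uniqueness result of \cite{fls} componentwise without already knowing the components are proportional, which is what you are trying to prove. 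The paper's Lemma~\ref{lm3.1} instead rebalances the sequence by choosing $\tau_n$ with $\tau_n^{2p}\int_{\R^N}|u_n|^{2p}=\int_{\R^N}|v_n|^{2p}$, applies Young's inequality to the cross term, and obtains that the quotient at $(u_n,v_n)$ is at least $f(\tau_n)S\ge f(\tau_{min})S$ purely algebraically, with no compactness at all.

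Second, and more seriously, your route to proportionality and uniqueness is missing the paper's key device: differentiating the least-energy level $B(\mu)$ of the perturbed system (with $\mu_1$ replaced by a parameter $\mu$) at $\mu=\mu_1$, in the spirit of Chen--Zou \cite{cz}. This yields $B'(\mu_1)=-\frac{1}{2p}\int_{\R^N}|u_0|^{2p}$ for \emph{any} positive least energy solution, hence $\int_{\R^N}|u_0|^{2p}=k_{min}^{2p}\int_{\R^N}w^{2p}$ together with the analogous identities for $\int_{\R^N}|v_0|^{2p}$ and $\int_{\R^N}|u_0|^p|v_0|^p$; these identities put each rescaled component on the Nehari manifold of the scalar equation \eqref{1.3}, force each to be a scalar ground state (hence a translate of $w$ by \cite{fls}), and the equality case of H\"older then gives $u_1=v_1$, i.e.\ a common center $x_0$. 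The same computation gives uniqueness of $\tau_{min}$, since two distinct minimizers would produce two distinct values of $B'(\mu_1)$. Your substitute --- ``one checks there is exactly one such critical point'' of $f$ --- is false in general: for $1<p<2$ and $0<\beta<(p-1)\mu_2$ the paper's Lemma~\ref{lm3.2} shows $f$ may have two local minima and three positive critical points, so uniqueness of the global minimizer cannot be read off from a sign analysis of $f'$ alone. Without the $B'(\mu_1)$ argument (or an equally quantitative replacement) both the proportionality of an arbitrary positive least energy solution and the uniqueness statement remain unproved.
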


\begin{remark}
Each non-zero  local maximum or non-zero  minimum point of $f(\tau)$ corresponds to a positive proportional  vector  solution of \eqref{1.1}, see Remark~\ref{re3.4} later.
\end{remark}

The proof for  Theorems  \ref{Th1} and \ref{Th1.4} can be divided into  two parts.
In the first part, we establish the existence of a positive proportional vector solution with the form $(k_1w,\tau k_1w)$ to \eqref{1.1}. In this case,  we see
\begin{equation}\label{KC}
(\mu_1+\beta\tau^p)k_1^{2p-2}=1=(\mu_2\tau^{2p-2}+\beta\tau^{p-2})k_1^{2p-2}
\end{equation}
 and $\tau$ satisfies
\begin{equation}
\mu_1+\beta\tau^p-\mu_2\tau^{2p-2}-\beta\tau^{p-2}=0.
\end{equation}
So we   need to
investigate the solvability  of the following equation
\begin{equation}\label{1.10}
g(\tau):=\mu_1+\beta\tau^p-\mu_2\tau^{2p-2}-\beta\tau^{p-2}=0,\quad \tau \in D.
\end{equation}

In the second part, we  prove that any positive  proportional vector solution, got in the first part, is
non-degenerate.   Our method is inspired by  \cite{bdw} and  \cite{pw} where a special case $s=1,N=3$ and $p=2$ was studied. We will convert the study on the non-degeneracy of the solutions for system \eqref{1.1} into that of a single equation by using linearization and spectral analysis. As Peng and Wang \cite{pw} did, here
we have to prove that $\tilde{f}(\tilde{\beta}) \neq \lambda_k~(\forall k\in N^+)$, where $\tilde{f}(\tilde{\beta})$ will be defined in
\eqref{3.2} and \eqref{3.3}, and $\lambda_k~(k\in N^+)$ are the eigenvalues of  the weighted
eigenvalue problem  $(-\Delta)^s u +u= \lambda w^{2p-2}u$.  However, compared to \cite{pw}, we will encounter  more difficulties. On one hand, since $p$ is more general in system \eqref{1.1}, we can not write out the explicit expression on $\tau_0, k_1$, which  makes  our discussion    more complicated. On the other hand, in \cite{pw}, Peng and Wang obtained  the non-degeneracy by proving that the corresponding $\tilde{f}(\tilde{\beta})$ is
monotone about $\tilde{\beta}$.  But in our case, we only get the same result for the case $\tilde{\beta}<0$. But, for the case $\tilde{\beta}>0$,  we have to discuss  $p$ in three cases:
$1<p<2, p=2, 2<p<2^*_s/2$. In each case,  we should carry out  some tedious and preliminary analysis  to get the non-degeneracy result in some ranges of  $\mu_1,\mu_2, p, \tilde{\beta}$.

To verify Theorem \ref{th1.6},  we first prove that \eqref{1.1} has  a least energy solution of the form $(kw, k\tau w)$. We observe  that  $(kw, k\tau w)$ is a least energy solution of \eqref{1.1} if and only if $S_{\mu_1,\mu_2}$ can be obtained by $(kw, k\tau w)$, which help us to reduce the  problem to considering a minimization problem $\min_{\tau\ge 0}f(\tau)$. Then, we prove that any positive  least energy solution to \eqref{1.1} must be proportional and the minimum point $\tau_{min}$ of $f(\tau)$ must be unique.

This paper is organized as follows. In Section 2, we will prove Theorems \ref{Th1} and \ref{Th1.4}. The  proof for Theorem \ref{th1.6}  will be provided  in Section \ref{s3}.
The analysis on  minimum or maximum points of $f(\tau)$ will  be given in the Appendix.

\section{ Existence and non-degeneracy of proportional solutions}

In this section, we  prove  Theorems \ref{Th1} and \ref{Th1.4}. To this end,   we first consider the following system about $\tau$:
\begin{equation}\label{2.1}
\left\{%
\begin{array}{ll}
p\tau^{2p-2}-2\tilde{\beta}\tau^p=\mu(2-p),\vspace{2mm}\\
\tilde{g}(\tau):=\mu+\tilde{\beta}\tau^p-\tilde{\beta}\tau^{p-2}-\tau^{2p-2}=0.
\end{array}%
\right.
\end{equation}

\begin{lemma}\label{lm2.1}
Assume that $\mu>1, \tilde{\beta}>0.$ Then system \eqref{2.1} admits no solutions in $\tilde{D}$, provided    one of the following conditions holds:\\
$(B_1)~ p>2, ~0<\tilde{\beta}\leq(p-1)\mu;$\\
$(B_2)~p>2,~\mu \geq \frac{1}{2}(\frac{p}{p-1})^{p-1},~\tilde{\beta }\geq (p-1)\mu;$\\
$(B_3)~p>2,~1<\mu<\frac{1}{2}(\frac{p}{p-1})^{p-1}, (p-1)\mu\leq \tilde{\beta}\leq\tilde{\beta}_0,
\hbox{or}~\max\Big\{\tilde{\beta}_1,(p-1)\mu\Big\}\leq \tilde{\beta};$\\
$(B_4)~1<p<2, ~\tilde{\beta}\geq (p-1)\mu^\frac{p-2}{2(p-1)};$\\
$(B_5)~1<p<2, ~1<\mu<\frac{p}{2-p},~0<\tilde{\beta}\leq \min\Big\{\frac{p-\mu(2-p)}{2}, ~2(p-1)(2-p)^\frac{2-p}{2(p-1)}(\frac{\mu}{p})^\frac{p}{2(p-1)}\Big\};~$\\
$(B_6)~1<p<2,\max\Big\{\frac{p-\mu(2-p)}{2},~0\Big\}<\tilde{\beta}<(p-1)\mu^\frac{p-2}{2(p-1)}$;\\
$(B_7)~p=2, ~\tilde{\beta} >0,$\\
where $\tilde{D}$ is defined in \eqref{1.7'}, and  $0<\tilde{\beta}_0<\tilde{\beta}_1$ are the roots of $\frac{2(p-1)}{p}\mu-H_1(\tilde{\beta})=0$.
\end{lemma}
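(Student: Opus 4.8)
The plan is to prove the non-solvability of system \eqref{2.1} by treating the two equations as a pair of independent constraints on $\tau$ and showing that the sets of solutions of each equation in $\tilde{D}$ are disjoint under the hypotheses $(B_1)$--$(B_7)$. First I would observe that the first equation of \eqref{2.1} can be rewritten, after dividing by $\tau^p$, as $p\tau^{p-2}-2\tilde{\beta}=\mu(2-p)\tau^{-p}$, so that it determines $\tilde{\beta}$ as an explicit function of $\tau$, namely $\tilde{\beta}=\frac{1}{2}\bigl(p\tau^{p-2}-\mu(2-p)\tau^{-p}\bigr)$. Likewise the second equation $\tilde{g}(\tau)=0$ can be solved for $\tilde{\beta}$ in terms of $\tau$: from $\tilde{\beta}(\tau^p-\tau^{p-2})=\tau^{2p-2}-\mu$ we get $\tilde{\beta}=\frac{\tau^{2p-2}-\mu}{\tau^p-\tau^{p-2}}=\frac{\tau^{2p-2}-\mu}{\tau^{p-2}(\tau^2-1)}$ whenever $\tau\neq 1$. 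A simultaneous solution of \eqref{2.1} would require these two expressions for $\tilde{\beta}$ to coincide, so the strategy is to show that for each admissible configuration no such common $\tau\in\tilde{D}$ exists.

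Next I would reduce the simultaneous condition to a single scalar equation in $\tau$. Setting the two expressions for $\tilde{\beta}$ equal and clearing the (sign-definite on $\tilde{D}$) denominators yields a polynomial-type identity in $\tau$; alternatively, and more cleanly, I would eliminate $\tilde{\beta}$ by substituting the first relation into $\tilde{g}$. This is precisely where the auxiliary function $H_1(t)=\frac{1}{p-1}t^2-\frac{p-2}{p}(p-1)^{-\frac{p}{p-2}}t^{\frac{2(p-1)}{p-2}}$ and the threshold constants $\tilde{\beta}_0,\tilde{\beta}_1$ (the roots of $\frac{2(p-1)}{p}\mu-H_1(\tilde{\beta})=0$) should enter: the elimination produces exactly the quantity $\frac{2(p-1)}{p}\mu-H_1(\tilde{\beta})$ whose sign governs whether a common root can lie in the prescribed interval. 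I would verify this link explicitly so that the cases $(B_3)$ and $(B_6)$, which are stated directly in terms of $\tilde{\beta}_0,\tilde{\beta}_1$ and of $H_1$, fall out by a monotonicity and sign analysis of $H_1$ on the relevant range. The remaining cases $(B_1),(B_2),(B_4),(B_5),(B_7)$ would then be handled by showing that the combined constraint forces $\tilde{\beta}$ outside the stated admissible window, using the monotonicity of $\tau\mapsto\frac{1}{2}(p\tau^{p-2}-\mu(2-p)\tau^{-p})$ on $\tilde{D}$ together with the boundary values at $\tau\to 0^+$, $\tau=1$, and $\tau\to+\infty$ dictated by the definition \eqref{1.7'} of $\tilde{D}$.

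The case split by the sign of $p-2$ is structural and unavoidable. For $p>2$ the interval $\tilde{D}$ is $(1,+\infty)$ when $\tilde{\beta}>(p-1)\mu$ and $(0,+\infty)$ otherwise, whereas for $1<p<2$ it is $(0,1)$ in the coupled subregime and $(0,+\infty)$ otherwise; the endpoint behaviour of the two candidate expressions for $\tilde{\beta}$ differs qualitatively in these regimes, and the degenerate point $\tau=1$ (where the denominator $\tau^p-\tau^{p-2}$ vanishes) must be examined separately, which is cleanest in the case $p=2$, i.e. $(B_7)$, where the second equation degenerates and the argument is immediate. I expect the main obstacle to be case $(B_3)$ (and its reflection $(B_6)$), where $\mu$ lies below the threshold $\frac{1}{2}(\frac{p}{p-1})^{p-1}$: here the single scalar equation obtained after elimination genuinely can have roots for intermediate $\tilde{\beta}$, and ruling them out requires the precise sign information carried by $H_1$ between its two roots $\tilde{\beta}_0$ and $\tilde{\beta}_1$. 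Establishing that $H_1$ is unimodal on the relevant range, locating its critical point, and matching the sign of $\frac{2(p-1)}{p}\mu-H_1(\tilde{\beta})$ against the prescribed $\tilde{\beta}$-intervals is the delicate computational heart of the lemma; I would defer the full monotonicity analysis of $H_1$ and of $f(\tau)$ to the Appendix, as the paper indicates, and cite it here.
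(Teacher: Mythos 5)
There is a genuine gap. Your overall strategy --- collapse the two simultaneous equations into a single scalar sign condition and do a case analysis --- is the same in spirit as the paper's, which forms the combination $G(\tau):=\tilde{g}(\tau)+\frac{1}{p}\bigl(F(\tau)-\mu(2-p)\bigr)=\frac{2(p-1)}{p}\mu+\frac{\tilde{\beta}(p-2)}{p}\tau^p-\tilde{\beta}\tau^{p-2}$ with $F(\tau)=p\tau^{2p-2}-2\tilde{\beta}\tau^p$, and shows $G$ cannot vanish at any root of $F(\tau)=\mu(2-p)$ lying in $\tilde{D}$. But your central claim about how $H_1$ enters is wrong as stated: if you literally eliminate $\tilde{\beta}$ by substituting $\tilde{\beta}=\frac{1}{2}\bigl(p\tau^{p-2}-\mu(2-p)\tau^{-p}\bigr)$ into $\tilde{g}(\tau)=0$, you obtain a function of $\tau$ alone (namely $\frac{\mu p}{2}+\frac{p-2}{2}\tau^{2p-2}-\frac{p}{2}\tau^{2p-4}+\frac{\mu(2-p)}{2}\tau^{-2}$), which cannot ``produce exactly the quantity $\frac{2(p-1)}{p}\mu-H_1(\tilde{\beta})$'' since that quantity still contains $\tilde{\beta}$. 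In the actual argument $H_1(\tilde{\beta})$ arises quite differently: one first localizes the roots of $F(\tau)=\mu(2-p)$, e.g.\ for $p>2$ and $\tilde{\beta}>(p-1)\mu$ one shows $\tau_1<1<\bigl(\tilde{\beta}/(p-1)\bigr)^{\frac{1}{p-2}}<\tau_2<\bigl(2\tilde{\beta}/p\bigr)^{\frac{1}{p-2}}$, and then uses the strict monotonicity of $G$ on $[1,+\infty)$ to bound $G(\tau_2)>G\bigl((\tilde{\beta}/(p-1))^{\frac{1}{p-2}}\bigr)=\frac{2(p-1)}{p}\mu-H_1(\tilde{\beta})$. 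That root-localization step, and the analogous one for $1<p<2$ (which is what separates $(B_5)$ from $(B_6)$, where the conclusion follows because the relevant root is forced outside $(0,1)=\tilde{D}$ altogether), is entirely missing from your plan, and it is precisely the mechanism that generates the thresholds $\tilde{\beta}_0,\tilde{\beta}_1$.

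Beyond this, none of the seven cases is actually verified: the boundary cases $\tilde{\beta}=(p-1)\mu$ (where $G(1)=0$ but $\tilde{g}(1)=\mu-1>0$ saves the day) and $\tilde{\beta}=(p-1)\mu^{\frac{p-2}{2(p-1)}}$ (where $F=\mu(2-p)$ has a double root and one must compute $G$ there directly) require separate explicit computations that your sketch does not anticipate. Finally, you propose to defer ``the full monotonicity analysis of $H_1$ and of $f(\tau)$ to the Appendix, as the paper indicates,'' but the Appendix proves Lemma \ref{lm3.2} about $f(\tau)$ only; the analysis of $H_1$ (its maximizer $\tilde{\beta}_{max}=p^{\frac{p-2}{2}}(p-1)^{\frac{4-p}{2}}$, the value $H_1(\tilde{\beta}_{max})=(\frac{p}{p-1})^{p-2}$, and the comparison with $\frac{2(p-1)}{p}\mu$ that splits $(B_2)$ from $(B_3)$) belongs to the proof of this lemma and cannot be outsourced. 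As it stands the proposal is a plausible plan with one incorrect structural claim and no completed case, so it does not constitute a proof.
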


\begin{proof} Set
$$F(\tau):=p\tau^{2p-2}-2\tilde{\beta}\tau^p,$$
 and
  $$G(\tau):=\tilde{g}(\tau)+\frac{1}{p}(F(\tau)-\mu(2-p)).$$
Then
$$G(\tau)=\frac{2(p-1)}{p}\mu+\frac{\tilde{\beta}(p-2)}{p}\tau^p-\tilde{\beta}\tau^{p-2}.
$$
 We divide the proof into three cases: $2<p, 1<p<2$ and $p=2.$

Case I: \,\,$\tilde{\beta}>0, p>2$.

 In this case,   $G(\tau)$ gets its
minimum at $ \tau^G_{min}=1$ and $G(1)=\frac{2}{p}[(p-1)\mu-\tilde{\beta}]$.

  If $0<\tilde{\beta}<(p-1)\mu$,  then $G(\tau^G_{min})>0.$ So  \eqref{2.1} has no solution in $\tilde{D}$.

   If $\tilde{\beta}=(p-1)\mu$, then $G(1)=0~\hbox{but}~\tilde{g}(1)=\mu-1>0$. So \eqref{2.1} has no solution in $\tilde{D}$.

 At last,  if $\tilde{\beta}>(p-1)\mu$, then  $F(1)<\mu(2-p)$ since $\mu>1$ and $2<p$.
So $F(\tau)=\mu(2-p)$ has two solutions $0<\tau_1<\tau_2$. It follows from $F(\tau_i)=\mu(2-p)$ that $p\tau_i^{2p-2}<2\tilde{\beta}\tau_i^p$ and $\mu(p-2)<2\tilde{\beta}\tau_i^p$, which implies
$$
\Big[\frac{\mu(p-2)}{2\tilde{\beta}}\Big]^\frac{1}{p}<\tau_1<\tau_2
<\Big[\frac{2\tilde{\beta}}{p}\Big]^\frac{1}{p-2}.
$$
  But from
$\tilde{\beta}>(p-1)\mu$, we know  $\tau^F_{min}=(\frac{\tilde{\beta}}{p-1})^\frac{1}{p-2}>1$. Therefore, by the graph of $F(\tau)$, we see
\begin{equation}\label{2.51}
\Big[\frac{\mu(p-2)}{2\tilde{\beta}}\Big]^\frac{1}{p}<\tau_1<1,\,\,
1<\Big(\frac{\tilde{\beta}}{p-1}\Big)^\frac{1}{p-2}<\tau_2
<\Big[\frac{2\tilde{\beta}}{p}\Big]^\frac{1}{p-2}.
\end{equation}
Since we only consider the solutions in $(1,+\infty)$, we can complete our proof if we can prove $G(\tau_2)\neq0$.
  $G(\tau)$  increases  strictly in $[1, +\infty],$  hence
  \begin{eqnarray*}
  G(\tau_2) &>&G\Big((\frac{\tilde{\beta}}{p-1})^\frac{1}{p-2}\Big)=\frac{2(p-1)}{p}\mu-\Big[\frac{1}{p-1}\tilde{\beta}^2-\frac{p-2}{p}(p-1)
^{-\frac{p}{p-2}}\tilde{\beta}^\frac{2(p-1)}{p-2}\Big]\\
&=&\frac{2(p-1)}{p}\mu-H_1(\tilde{\beta}).
 \end{eqnarray*}
 Using the definition of $H_1(\tilde{\beta})$, we conclude that $H_1(\tilde{\beta})$ attains its maximum at $ \tilde{\beta}_{max}=p^\frac{p-2}{2}(p-1)^\frac{4-p}{2}$ and $~H_1(\tilde{\beta}_{max})=(\frac{p}{p-1})^{p-2}.$ Now we have two subcases: If $\mu \geq \frac{1}{2}(\frac{p}{p-1})^{p-1},$ then $\frac{2(p-1)}{p}\mu\geq H_1(\tilde{\beta}_{max}).$ That is,
$G(\tau_2) >G((\frac{\tilde{\beta}}{p-1})^\frac{1}{p-2})\geq 0,$ which implies that $\tilde{g}(\tau_2) \neq 0$ and \eqref{2.1} has no solution in $\tilde{D}$; If $1<\mu<\frac{1}{2}(\frac{p}{p-1})^{p-1},$ then  $\frac{2(p-1)}{p}\mu< H_1(\tilde{\beta}_{max}).$
Hence  there exist $\tilde{\beta}_0, \tilde{\beta}_1$ such that  $\frac{2(p-1)}{p}\mu\geq H_1(\tilde{\beta})$ when $0<\tilde{\beta} \leq \tilde{\beta}_0$ or $\tilde{\beta} \geq \tilde{\beta}_1$.  So
$G(\tau_2) >G((\frac{\tilde{\beta}}{p-1})^\frac{1}{p-2})\geq 0,$
which also implies that $\tilde{g}(\tau_2) \neq 0$.
As a result, if $(p-1)\mu\leq \tilde{\beta}\leq \tilde{\beta}_0~\hbox{or}~\max\{(p-1)\mu, \tilde{\beta}_1\}\leq\tilde{\beta}$, then \eqref{2.1} has no solution in $\tilde{D}$.\\

Case II:\,\,$ \tilde{\beta} >0, 1<p<2$

In this case, $F(\tau^F_{max})=(2-p)(\frac{p-1}{\tilde{\beta}})^\frac{2(p-1)}{2-p}$.

If $(p-1)\mu^\frac{p-2}{2(p-1)} <\tilde{\beta}
 $, then $F(\tau)\leq F(\tau^F_{max})<\mu(2-p)$. So  \eqref{2.1} has no solution in $\tilde{D}$.

 If $\tilde{\beta}=(p-1)\mu^\frac{p-2}{2(p-1)}$, then $F(\tau)=\mu(2-p)$ has unique solution $ \tau_0=(\frac{p-1}{\tilde{\beta}})^\frac{1}{2-p}.$  Direct calculation gives
 \begin{eqnarray*}
 G(\tau_0)&=&\frac{2(p-1)}{p}\mu-\frac{2-p}{p}(p-1)-(p-1)\mu^\frac{p-2}{p-1}\\
 &>&\frac{2(p-1)}{p}\mu-\frac{2-p}{p}(p-1)-(p-1)
 =\frac{2(p-1)}{p}(\mu-1)>0.
 \end{eqnarray*}
 So \eqref{2.1} has no solution in $\tilde{D}$.

At last,  if $0< \tilde{\beta}<(p-1)\mu^\frac{p-2}{2(p-1)}$, then the equation  $F(\tau)=\mu(2-p)$ has two solutions $\tau_1, \tau_2.$ This case is more complicated. From the definition of $F(\tau)$, we know that $F(\tau)$  increases strictly in $[0, (\frac{p-1}{\tilde{\beta}})^\frac{1}{2-p}]$ and  decreases strictly in $[(\frac{p-1}{\tilde{\beta}})^\frac{1}{2-p}, +\infty]$. Similarly, we deduce
 that if $1<\mu<\frac{p}{2-p}~\hbox{and}~0<\tilde{\beta}\leq\frac{p-\mu(2-p)}{2}$, then $F(1)\geq \mu(2-p).$ Proceeding as we prove  \eqref{2.51}, we find
$$
\Big[\frac{\mu(2-p)}{p}\Big]^\frac{1}{2(p-1)}<\tau_1\leq 1,\,\,  1<\Big(\frac{p-1}{\tilde{\beta}}\Big)^\frac{1}{2-p}<\tau_2<\Big[\frac{p}{2\tilde{\beta}}\Big]^\frac{1}{2-p}.
$$

Similar  to case $p>2$, we only need to prove $G(\tau_1)\neq0$. Direct computation yields that   $G(\tau)$  increases strictly  in $[0, 1]$ and decreases strictly  in $[1, +\infty]$. If $\tilde{\beta}\leq 2(p-1)(2-p)^\frac{2-p}{2(p-1)}(\frac{\mu}{p})^\frac{p}{2(p-1)},$ then $ G\Big(\big[\frac{\mu(2-p)}{p}\big]^\frac{1}{2(p-1)}\Big)\geq 0$. Since $G(1) \geq \mu-1>0$, we conclude  that $G(\tau_1)>0$; But if $\max\Big\{\frac{p-\mu(2-p)}{2},~0\Big\}<\tilde{\beta}<(p-1)\mu^\frac{p-2}{2(p-1)}$, then
 $\max\{(\frac{\mu(2-p)}{p})^\frac{1}{2(p-1)}, 1 \}<\tau_1<(\frac{p-1}{\tilde{\beta}})^\frac{1}{2-p}<\tau_2<(\frac{p}{2\tilde{\beta}})^\frac{1}{2-p},$
 which implies $p\tau^{2p-2}-2\tilde{\beta}\tau^p=\mu(2-p)$ has no solution in $(0,1)$, and
 hence \eqref{2.1} has no solution in $\tilde{D}$.\\

Case  III:\,\, $p=2$

 \eqref{2.1} can be written as
$$\left\{%
\begin{array}{ll}
\tau^2-\tilde{\beta}\tau^2=0,\\
\mu+\tilde{\beta}\tau^2-\tilde{\beta}-\tau^2=0.
\end{array}%
\right.$$
It is easy to see that the above system has no solutions in $\tilde{D}$.
\end{proof}

In the following, we will consider the case  $\tilde{\beta}<0$.

Define
$$
l(\tilde{\beta})=\frac{\mu(2p-1)+\tilde{\beta}(p-1)\tau^p-\tilde{\beta} p \tau^{p-2}}{\mu+\tilde{\beta} \tau^p},
$$
where $\tau\in \tilde{D}~\hbox{and}~\tilde{\beta}$ satisfy
$$
\mu+\tilde{\beta}\tau^p-\tau^{2p-2}-\tilde{\beta}\tau^{p-2}=0,\quad \mu+\tilde{\beta}\tau^p>0,\quad \tau^{2p-2}+\tilde{\beta}\tau^{p-2}>0.
$$

 \begin{lemma}\label{npro3}
Assume that $0<s<1, 1<p<\frac{2^*_s}{2}, \mu>1$. Then $l(\tilde{\beta})$  decreases strictly in $(-\sqrt{\mu},0).$
 \end{lemma}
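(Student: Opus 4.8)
The plan is to eliminate the coupling parameter $\tilde\beta$ in favour of $\tau$, turning the statement into a one-variable monotonicity question with a transparent sign; differentiating $l$ directly in $\tilde\beta$ (carrying along the implicit derivative $\tau'(\tilde\beta)$) is possible but produces an unwieldy rational expression, so I would instead exploit the constraint to reparametrise. Writing $g(\tau,\tilde\beta)=\mu+\tilde\beta\tau^{p}-\tau^{2p-2}-\tilde\beta\tau^{p-2}$, the constraint $g=0$ says precisely that $M:=\mu+\tilde\beta\tau^{p}$ equals $P:=\tau^{2p-2}+\tilde\beta\tau^{p-2}$, both required positive. Solving $g=0$ for $\tilde\beta$ gives the explicit inverse relation
\[
\tilde\beta=\tilde\beta(\tau)=\frac{\tau^{2p-2}-\mu}{\tau^{p}-\tau^{p-2}},\qquad \tau>1 .
\]
Evaluating at the endpoints gives $\tilde\beta(\mu^{1/(2p)})=-\sqrt\mu$ and $\tilde\beta(\mu^{1/(2p-2)})=0$, and the factorisation $\tau^{2p-2}+\sqrt\mu\,\tau^{p}-\sqrt\mu\,\tau^{p-2}-\mu=(\tau^{p-2}+\sqrt\mu)(\tau^{p}-\sqrt\mu)$ shows that $\tilde\beta>-\sqrt\mu$ is equivalent to $\tau>\mu^{1/(2p)}$. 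Hence $\tilde\beta\in(-\sqrt\mu,0)$ corresponds to $\tau\in(\mu^{1/(2p)},\mu^{1/(2p-2)})$, an interval on which $\tau>1$ and $\tilde\beta<0$.

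Next I would show this correspondence is an increasing bijection. By implicit differentiation $\frac{d\tilde\beta}{d\tau}=-g_\tau/g_{\tilde\beta}$, where $g_{\tilde\beta}=\tau^{p-2}(\tau^{2}-1)>0$ and, factoring out $\tau^{p-3}$, $g_\tau=\tau^{p-3}\bigl[\tilde\beta(p\tau^{2}-(p-2))-(2p-2)\tau^{p}\bigr]$. On the interval above, $\tau>1$ and $\tilde\beta<0$ force $g_\tau<0$, so $d\tilde\beta/d\tau>0$; in particular $\tau(\tilde\beta)$ is single valued, so $l(\tilde\beta)$ really is a function, and $l$ is strictly decreasing in $\tilde\beta$ if and only if $l$, read as a function of $\tau$ along the curve, is strictly decreasing in $\tau$.

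Now comes the key simplification. Substituting $\tilde\beta(\tau)$ into $M$ and into the numerator $N:=\mu(2p-1)+\tilde\beta(p-1)\tau^{p}-\tilde\beta p\,\tau^{p-2}$ and cancelling the common factor $\tau^{2}-1$, one collapses $l$ to the clean rational form
\[
l(\tau)=\frac{(p-1)\tau^{2p}-p\,\tau^{2p-2}+\mu p\,\tau^{2}-\mu(p-1)}{\tau^{2p}-\mu}.
\]
Differentiating and clearing the positive factors (the denominator $\tau^{2p}-\mu$ is positive here), $l'(\tau)$ has the same sign as
\[
\psi(x)=\bigl[x^{2(p-1)}-\mu^{2}\bigr]-\mu(p-1)x^{p-2}(x^{2}-1),\qquad x=\tau^{2}\in(\mu^{1/p},\mu^{1/(p-1)}).
\]
The whole point is that on this interval $x^{p-1}<\mu$, so the first bracket is negative, while $x>1$ makes $\mu(p-1)x^{p-2}(x^{2}-1)$ strictly positive; both contributions to $\psi$ are therefore negative, so $\psi(x)<0$ and $l'(\tau)<0$. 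Combined with the increasing reparametrisation, this yields that $l$ is strictly decreasing in $\tilde\beta$ on $(-\sqrt\mu,0)$.

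The genuinely delicate part is not the final sign check — once $l$ is expressed through $\tau$ the negativity is immediate because $\psi$ splits into two manifestly negative pieces — but the reduction that makes it so. I expect the main obstacle to be the bookkeeping: correctly using $M=P$ to eliminate $\tilde\beta$, verifying the endpoint and monotonicity facts that pin down the interval $(\mu^{1/(2p)},\mu^{1/(2p-2)})$ together with the orientation of the reparametrisation, and carrying out the cancellation of $\tau^{2}-1$ that reduces $l$ to its one-line form. Keeping the sign conventions consistent, so that ``decreasing in $\tau$'' correctly translates into ``decreasing in $\tilde\beta$'', is the place where care is most needed.
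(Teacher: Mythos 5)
Your proof is correct, and it takes a genuinely different route from the paper's. The paper keeps $\tilde\beta$ as the working variable: it implicitly differentiates the constraint to get $\tau'(\tilde\beta)$, computes $l'(\tilde\beta)$ directly, and reduces its sign to that of $T(\tilde\beta)=[\tilde\beta^2-\mu(p-1)]\tau^p-[\tilde\beta^2+\mu(p-1)]\tau^{p-2}+2\mu\tilde\beta$, which still mixes $\tau$ and $\tilde\beta$ and therefore forces a three-way case analysis ($p\ge 2$; $1<p<2$ with $\tilde\beta^2\le\mu(p-1)$; $1<p<2$ with $\mu(p-1)<\tilde\beta^2<\mu$), the last case needing a secondary monotonicity argument for $T$ itself. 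You instead eliminate $\tilde\beta$: solving the constraint gives $\tilde\beta(\tau)=\frac{\tau^{2p-2}-\mu}{\tau^{p-2}(\tau^2-1)}$, an increasing bijection from $(1,\mu^{1/(2p-2)})$ onto $(-\infty,0)$ (your sign check $g_\tau<0$, $g_{\tilde\beta}>0$ for $\tau>1$, $\tilde\beta<0$ is right), so $\tilde\beta\in(-\sqrt\mu,0)$ corresponds exactly to $\tau\in(\mu^{1/(2p)},\mu^{1/(2p-2)})$, and $l$ collapses to $l(\tau)=\frac{(p-1)\tau^{2p}-p\tau^{2p-2}+\mu p\tau^2-\mu(p-1)}{\tau^{2p}-\mu}$. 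I verified this formula and the identity reducing the sign of $l'(\tau)$ to $\psi(x)=x^{2(p-1)}-\mu^2-\mu(p-1)x^{p-2}(x^2-1)$ with $x=\tau^2$; on your interval both pieces of $\psi$ are negative, so no case split on $p$ is needed. Your approach buys uniformity in $p$, an explicit formula for $l$ along the constraint, and, as a by-product, single-valuedness of $\tau(\tilde\beta)$ (so $l$ is genuinely a function, a point the paper leaves implicit); the paper's approach avoids the algebra of eliminating $\tilde\beta$ at the price of the case analysis. Both reach the same conclusion.
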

\begin{proof} Differentiating with respect to $ \tilde{\beta}$ on both sides of the equation
 $\mu+\tilde{\beta}\tau^p-\tau^{2p-2}-\tilde{\beta}\tau^{p-2}=0$, we get
 $$
 \tau^\prime(\tilde{\beta})=\frac{\tau(\tau^2-1)}{2(p-1)\tau^p-\tilde{\beta} p\tau^2+\tilde{\beta}(p-2)}.
 $$
 So
  \begin{eqnarray*}
 l^\prime(\tilde{\beta})&=&\ds\frac{\big[(p-1)\tau^p-p\tau^{p-2}+\tilde{\beta} p\tau^{p-3}((p-1)\tau^2-(p-2))\tau^\prime(\tilde{\beta})\big]\big(\mu+\tilde{\beta} \tau^p\big)}{(\mu+\tilde{\beta} \tau^p)^2}\\
 &&-\ds\frac{\tau^p\big[\mu(2p-1)+\tilde{\beta}(p-1)\tau^p-\tilde{\beta} p \tau^{p-2}\big]}{(\mu+\tilde{\beta} \tau^p)^2}\\
 &&+\ds\frac{\tilde{\beta} p\tau^{p-1}\big[\mu(2p-1)+\tilde{\beta}(p-1)\tau^p-\tilde{\beta} p \tau^{p-2}\big]\tau^\prime(\tilde{\beta})}{(\mu+\tilde{\beta} \tau^p)^2}\\
 &=&\ds\frac{1}{(\mu+\tilde{\beta} \tau^p)^2}\Big[-\mu p\tau^{p-2}(\tau^2+1)+\tilde{\beta} p\tau^{p-2}(\tau^2-1)\frac{2\tilde{\beta}\tau^p-\mu p\tau^2+\mu(2-p)}{2(p-1)\tau^p-\tilde{\beta} p\tau^2+\tilde{\beta}(p-2)}\Big]\\
 &=&\ds\frac{\tau^{p-2}}{(\mu+\tilde{\beta} \tau^p)^2}\frac{\tilde{\beta} p(\tau^2-1)\big(2\tilde{\beta}\tau^p\big)}{2(p-1)\tau^p-\tilde{\beta} p\tau^2+\tilde{\beta}(p-2)}\\
 &&-\ds\frac{\big[\mu p\tau^2+\mu(2-p)\big]-\mu p(\tau^2+1)\big[2(p-1)\tau^p-\tilde{\beta} p\tau^2+\tilde{\beta}(p-2)\big]}{2(p-1)\tau^p-\tilde{\beta} p\tau^2+\tilde{\beta}(p-2)}\\
 &=&\ds\frac{2p\tau^p}{(\mu+\tilde{\beta} \tau^p)^2}\frac{\big[\tilde{\beta}^2-\mu(p-1)\big]\tau^p-\big[\tilde{\beta}^2+\mu(p-1)\big]\tau^{p-2}+2\mu\tilde{\beta}}{2(p-1)\tau^p-\tilde{\beta} p\tau^2+\tilde{\beta}(p-2)}.
\end{eqnarray*}
 Since $\mu+\tilde{\beta}\tau^p-\tau^{2p-2}-\tilde{\beta}\tau^{p-2}=0, \tilde{\beta}<0$ and $0<\tau\in \tilde{D}$, we see $\tau>1,~\hbox{and}~ \mu-\tau^{2p-2}>0$, which combined  with  $\mu+\tilde{\beta}\tau^p>0,\,\tau^{2p-2}+\tilde{\beta}\tau^{p-2}>0$ yields
  \begin{equation}\label{2.3}
 \min\Big\{(\frac{\mu}{|\tilde{\beta}|})^\frac{1}{p},\mu^\frac{1}{2(p-1)}\Big\}>\tau >\max\Big\{1,|\tilde{\beta}|^\frac{1}{p}\Big\}.
  \end{equation}

 Set $M(\tau)=2(p-1)\tau^p-\tilde{\beta} p\tau^2+\tilde{\beta}(p-2).$  Then $M(\tau)$ increases in $[1,+\infty)$  and   $M(\tau)>M(1)=2(p-1)-2\tilde{\beta}>0$. To get some $\tau$ satisfying \eqref{2.3}, we know that   $\tilde{\beta}$ should satisfy $ \tilde{\beta}^2<\mu.$  Set
 $$
 T(\tilde{\beta})=
  \big[\tilde{\beta}^2-\mu(p-1)\big]\tau^p-\big[\tilde{\beta}^2+\mu(p-1)\big]\tau^{p-2}+2\mu\tilde{\beta}.
  $$

  If $p\geq 2$, we see from  $ \tilde{\beta}^2<\mu$ that $ \tilde{\beta}^2\leq\mu(p-1).$  Using the facts that $\tau>0, \mu>1 $ and $\tilde{\beta}<0$, we conclude   $T(\tilde{\beta})<0$.  So $l(\tilde{\beta})$   decreases strictly in $(-\sqrt{\mu},0).$

  If $1<p<2$ and $\tilde{\beta}^2\leq\mu(p-1)$, similar to the  case $p\geq 2$, we obtain  that  $l(\tilde{\beta})$  decreases strictly in $(-\sqrt{\mu(p-1)},0)$.

    At last, if $1<p<2$ and $\mu>\tilde{\beta}^2>\mu(p-1)$, it follows from  $\tau>1$ that $\tau^\prime(\tilde{\beta})>0$, which combined  with $\mu+\tilde{\beta}\tau^p-\tau^{2p-2}-\tilde{\beta}\tau^{p-2}=0$ yields that
    $$
    T^\prime(\tilde{\beta})=2\tau^{2p-2}+ \Big\{\big[\tilde{\beta}^2-\mu(p-1)\big]p\tau^{p-1}+\big[\tilde{\beta}^2
  +\mu(p-1)\big](2-p)\tau^{p-3}\Big\}\tau^\prime(\tilde{\beta})>0.
  $$
   Thus $T(\tilde{\beta})$ increases in $[-\sqrt{\mu}, -\sqrt{\mu(p-1)}]$. Direct computation verifies   $T(-\sqrt{\mu(p-1)})<0$.
 So $T(\tilde{\beta})<0$ in $[-\sqrt{\mu}, -\sqrt{\mu(p-1)}]$. As a consequence,  $l(\tilde{\beta})$  decreases strictly in $[-\sqrt{\mu}, -\sqrt{\mu(p-1)}]$.

  \end{proof}

\begin{lemma}\label{lm2.3} Suppose that $p\in(1,+\infty)\setminus\{2\}, \beta \in (-\sqrt{\mu_1\mu_2},0)\cup(0, +\infty)~~ \hbox{or}~~
p=2, \beta \in (-\sqrt{\mu_1\mu_2},0)\cup(0, \mu_2)\cup(\mu_1, +\infty)$. Then there exists    $\tau_0\in D$ such that $g(\tau_0)=0, \mu_1+\beta\tau_0^p>0$  and $\mu_2\tau_0^{2p-2}+\beta\tau_0^{p-2}>0$, where $g(\tau)$ is defined in \eqref{1.10}.
\end{lemma}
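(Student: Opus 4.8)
The plan is to locate $\tau_0$ via the intermediate value theorem, treating the signs of $\beta$ separately, since the two positivity conditions $\mu_1+\beta\tau_0^p>0$ and $\mu_2\tau_0^{2p-2}+\beta\tau_0^{p-2}>0$ behave very differently in the two cases. Throughout I would use the normalization $\mu_1>\mu_2$ and the elementary value $g(1)=\mu_1-\mu_2>0$, where $g$ is the function in \eqref{1.10}.

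For $\beta>0$ both positivity conditions hold automatically for every $\tau>0$, being sums of positive terms, so it suffices to produce a zero of $g$ inside $D$. For $1<p<2$ the term $-\beta\tau^{p-2}$ drives $g(\tau)\to-\infty$ as $\tau\to0^+$, so $g$ vanishes somewhere in $(0,1)$, which is contained in $D$ under either alternative of \eqref{1.7}. For $p>2$ one has $g(0^+)=\mu_1>0$ while $-\mu_2\tau^{2p-2}$ drives $g(\tau)\to-\infty$ as $\tau\to+\infty$; since $g(1)>0$, the zero lies in $(1,+\infty)\subseteq D$. For $p=2$ the equation is linear in $\tau^2$ and solves explicitly as $\tau_0^2=(\mu_1-\beta)/(\mu_2-\beta)$, which is positive exactly when $\beta\in(0,\mu_2)\cup(\mu_1,+\infty)$, matching the admissible range (and accounting for the nonexistence when $\beta\in[\mu_2,\mu_1]$).

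For $\beta\in(-\sqrt{\mu_1\mu_2},0)$ we have $D=(0,+\infty)$, and the two positivity conditions are precisely the two-sided bound $a<\tau<b$ with $a=(|\beta|/\mu_2)^{1/p}$ and $b=(\mu_1/|\beta|)^{1/p}$. The crucial point is that this interval is nonempty exactly because $\beta^2<\mu_1\mu_2$, which is the hypothesis. I would then read off the sign of $g$ at the endpoints: at $\tau=a$ the two terms $-\mu_2\tau^{2p-2}-\beta\tau^{p-2}$ cancel, leaving $g(a)=\mu_1+\beta a^p=\mu_1-\beta^2/\mu_2>0$; at $\tau=b$ the two terms $\mu_1+\beta\tau^p$ cancel, leaving $g(b)=-b^{p-2}(\mu_2 b^p+\beta)$, and since $\mu_2 b^p+\beta=(\beta^2-\mu_1\mu_2)/\beta>0$ (a negative quantity over a negative one), $g(b)<0$. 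Hence $g(a)>0>g(b)$ and the intermediate value theorem furnishes a zero $\tau_0\in(a,b)$; lying strictly inside $(a,b)$, this $\tau_0$ meets both positivity conditions automatically.

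The routine part is the boundary bookkeeping in the $\beta>0$ subcases and checking that the located zero sits in the correct alternative of $D$; none of this presents real difficulty. The step worth highlighting is the $\beta<0$ analysis, where the key idea is that the positivity conditions themselves supply the natural endpoints $a,b$ for the intermediate value argument, so that the single hypothesis $|\beta|<\sqrt{\mu_1\mu_2}$ simultaneously makes $(a,b)$ nonempty and forces the sign reversal $g(a)>0>g(b)$. Once this is seen the conclusion is immediate.
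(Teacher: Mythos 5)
Your proposal is correct and follows essentially the same route as the paper: the same split into $\beta>0$ (with subcases $1<p<2$, $p>2$, $p=2$) and $\beta<0$, the same intermediate value argument using $g(1)=\mu_1-\mu_2>0$ and the limits at $0^+$ or $+\infty$, and for $\beta<0$ the same endpoint evaluations $g\bigl((|\beta|/\mu_2)^{1/p}\bigr)=(\mu_1\mu_2-\beta^2)/\mu_2>0$ and $g\bigl((\mu_1/|\beta|)^{1/p}\bigr)<0$. The only (welcome) addition is that you state explicitly why the located zero lies in the correct alternative of $D$, which the paper leaves implicit.
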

\begin{proof}  The proof can be divided  into two cases: $\beta>0, \beta<0$.

Case I: $\beta>0$.

In this case, we consider the following three subcases:

1)\,\, $1<p<2$.

We see $2(p-1)>0, p-2<0$ and for any fixed $\beta>0$,  $\lim\limits_{\tau \to 0^+} g(\tau)=-\infty$ and $g(1)=\mu_1-\mu_2>0$. So there exists  $\tau_0(\beta)\in D$
such that $g(\tau_0)=0, \mu_1+\beta\tau_0^p>0 ~\hbox{and}~ \mu_2\tau_0^{2p-2}+\beta\tau_0^{p-2}>0$.

2)\,\,  $p>2$.

We have $2p-2>p> p-2>0$ and for any fixed $\beta>0$,  $g(1)>0,~~ \hbox{and}~~
\lim\limits_{\tau \to +\infty}g(\tau)=-\infty.$  Hence there exists $\tau_0(\beta)\in D$
such that $g(\tau_0)=0, \mu_1+\beta\tau_0^p>0 ~\hbox{and}~ \mu_2\tau_0^{2p-2}+\beta\tau_0^{p-2}>0$.

3)\,\, $p=2$.

  We find $g(\tau)=\mu_1-\beta+(\beta-\mu_2)\tau^2$. If $\beta>\mu_1,~~ \hbox{or}~~ 0<\beta<\mu_2$, then  there exists  $\tau_0(\beta)\in D$ such that $g(\tau_0)=0, \mu_1+\beta\tau_0^p>0 ~\hbox{and}~ \mu_2\tau_0^{2p-2}+\beta\tau_0^{p-2}>0$.

Case II:  $\beta <0$.

Firstly,  to guarantee  that $\mu_1+\beta\tau_0^p>0 ~\hbox{and}~ \mu_2\tau_0^{2p-2}+\beta\tau_0^{p-2}>0$, we need  $(\frac{|\beta|}{\mu_2})^\frac{1}{p}<\tau_0<(\frac{\mu_1}{|\beta|})^\frac{1}{p},$ which implies
$\beta \in (-\sqrt{\mu_1\mu_2}, 0).$ Secondly, since  $\beta \in (-\sqrt{\mu_1\mu_2}, 0)$, we see $g((\frac{|\beta|}{\mu_2})^\frac{1}{p})=\frac{\mu_1\mu_2-\beta^2}{\mu_2}>0$
 and $g((\frac{\mu_1}{|\beta|})^\frac{1}{p})=-\frac{1}{|\beta|}(\frac{\mu_1}{|\beta|})^\frac{p-2}{p}(\mu_1\mu_2-\beta^2)<0$. Thus there exists    $\tau_0(\beta)\in D$ such that $g(\tau_0)=0$.

 Therefore, when $\beta \in (-\sqrt{\mu_1\mu_2}, 0)$, there exists  $\tau_0(\beta)\in D$
such that $g(\tau_0)=0, \mu_1+\beta\tau_0^p>0$ and $\mu_2\tau_0^{2p-2}+\beta\tau_0^{p-2}>0$.
\end{proof}

\begin{proof} [\bf Proof of  Theorem \ref{Th1} and  Theorem \ref{Th1.4}:] We first prove the existence of positive solutions with the form $(kw, \tau kw)$  to \eqref{1.1}.  Put $(U,V):=(kw, \tau kw)$ into  system \eqref{1.1}, we have
$$
\left\{%
\begin{array}{ll}
(-\Delta)^s w +w=(\mu_1+\beta\tau^p)|k|^{2p-2}w^{2p-1},~~x\in \R^N,\vspace{2mm}\\
(-\Delta)^s w +w=(\mu_2\tau^{2p-2}+\beta\tau^{p-2})|k|^{2p-2}w^{2p-1},~~x\in \R^N.\\
\end{array}%
\right.
$$
According to   Lemma \ref{lm2.3},  we can find  $\tau_0\in D$  such that
$$\mu_1+\beta\tau_0^p=\mu_2\tau_0^{2p-2}+\beta\tau_0^{p-2}>0.
$$
Thus take $k_1>0$ satisfying $k_1^{2p-2}=(\mu_1+\beta\tau_0^p)^{-1}$, then
$$
(\mu_1+\beta\tau_0^p)k_1^{2p-2}=1, (\mu_2\tau_0^{2p-2}+\beta\tau_0^{p-2})k_1^{2p-2}=1,
$$
 which implies that  $(k_1w, \tau_0k_1w)$ is a radial positive  solution of \eqref{1.1}.

Next, we prove that any positive proportional vector solution got above is  non-degenerate.

Let  $(U,V):=(k_1w, \tau_0k_1w)\in H^s(\R^N)\times H^s(\R^N)$ is a positive  solution  of  the  system \eqref{1.1}. In system \eqref{1.1}, making  a change $(u,v) \to (\mu_2^{-\frac{1}{2p-2}}u,
 \mu_2^{-\frac{1}{2p-2}}v)$, we see
\begin{equation}\label{3.1}
\left\{%
\begin{array}{ll}
(-\Delta)^s u +u= \mu|u|^{2p-2}u+\tilde{\beta} |v|^p|u|^{p-2}u,~~x\in \R^N,\vspace{2mm}\\
(-\Delta)^s v +v= |v|^{2p-2}v+\tilde{\beta} |u|^p|v|^{p-2}v,~~x\in \R^N,
\end{array}
\right.
\end{equation}
where $\mu=\frac{\mu_1}{\mu_2}, \tilde{\beta}=\frac{\beta}{\mu_2}$.
So to complete the proof of  Theorems \ref{Th1} and \ref{Th1.4}, it suffices to prove the same conclusion for system \eqref{3.1}.

 Consider the weighted eigenvalue problem in $ \lambda: (-\Delta)^s u +u= \lambda w^{2p-2}u$. It follows from \cite{fls} that this equation  has a sequence of eigenvalues
$1 = \lambda_1 < \lambda_2 = \lambda_3 = \cdots=\lambda_{N+1}=2p-1< \lambda_{N+2}\leq\cdots$ with associated eigenfunctions $\Phi_k$ satisfying
$\int_{\R^N}w^{2p-2}\Phi_k\Phi_m=0$
for $k\neq m$. For $\Phi_k$ with $k = 2, 3,\cdots,N+1$, we may take them as $\frac{\partial w}{\partial x_1},
\frac{\partial w}{\partial x_2},\cdots,\frac{\partial w}{\partial x_N}.$
 The  linearization  of  \eqref{3.1} at $(\mu_2^{-\frac{1}{2p-2}}U, \mu_2^{-\frac{1}{2p-2}}V)$ is
$$
\left\{%
\begin{array}{ll}
(-\Delta)^s \varphi +\varphi=w^{2p-2}(a\varphi+b\psi),~~x\in \R^N,\vspace{2mm}\\
(-\Delta)^s \psi +\psi=w^{2p-2}(c\psi+b\varphi),~~x\in \R^N,
\end{array}%
\right.
$$
where
\begin{eqnarray*}
a(\tau_0)&=&[\mu(2p-1)+\tilde{\beta}(p-1)\tau_0^p]k_1^{2p-2},\\
b(\tau_0)&=&\tilde{\beta} p \tau_0^{p-1}k_1^{2p-2},\\
c(\tau_0)&=&[(2p-1)\tau_0^{2p-2}+\tilde{\beta}(p-1)\tau_0^{p-2}]k_1^{2p-2}.
\end{eqnarray*}
Let $\gamma_{\pm}=\frac{(a-c)}{2b}\pm\frac{\sqrt{(c-a)^2+4b^2}}{2b} $ be the solutions of equation
$$
c\gamma-b=\gamma(a-b\gamma).
$$
Now, we complete the proof of Theorem \ref{Th1.4}.

If $\tilde{\beta}<0$, by direct computation, we obtain  $a-b\gamma_{-}=2p-1.$
So
$$
(-\Delta)^s(\varphi-\gamma_{-}\psi)+(\varphi-\gamma_{-}\psi)=(2p-1)w^{2p-2}(\varphi-\gamma_{-}\psi),
$$
and
$$
\varphi-\gamma_{-}\psi=\sum\limits_{j=2}^{N+1}\alpha_j\Phi_j.
$$
Thus,
$$
(-\Delta)^s\psi+\psi =(b\gamma_{-}+c)\psi
w^{2p-2}+\sum\limits_{j=2}^{N+1}b\alpha_j\Phi_jw^{2p-2}.
$$

Set $\psi=\sum\limits_{j=1}^\infty \Gamma_j\Phi_j$
and
\begin{equation}\label{3.2}
\tilde{f}(\tilde{\beta})=b\gamma_{-}+c.
 \end{equation}
 By direct computation, we have  $\tilde{f}(\tilde{\beta})=\frac{\mu(p-1)-\tilde{\beta}\tau_0^p+p\tau_0^{2p-2}}{\mu+\tilde{\beta}\tau_0^p}.$  Since $\mu+\tilde{\beta}\tau_0^p-\tau_0^{2p-2}-\tilde{\beta}\tau_0^{p-2}=0$,
we obtain that $\tilde{f}(\tilde{\beta})=l(\tilde{\beta})
=\frac{\mu(2p-1)+\tilde{\beta}(p-1)\tau_0^p-\tilde{\beta} p \tau_0^{p-2}}{\mu+\tilde{\beta} \tau_0^p}$.
From Lemma \ref{npro3},  there exists a decreasing sequence $\{\tilde{\beta_k}\}$ such that $\tilde{f}(\tilde{\beta_k})=\lambda_k$ and
$\tilde{f}(\tilde{\beta})\neq \lambda_k$ for any $\tilde{\beta}\in (-\sqrt{\mu},0)\setminus\{\tilde{\beta_k}\}$ and  $k=1,2,\cdots$. Using
orthogonality, we see  that $\Gamma_j=0$ for $j\neq 2, 3,\cdots,N+1$ and $\Gamma_j=\frac{b\alpha_j}{2p-1-\tilde{f}(\tilde{\beta})}
$ for $j= 2, 3,\cdots,N+1$.
Thus, the kernel at $(U, V)$ is given by span$\{((\gamma_-\frac{b}{2p-1-\tilde{f}(\tilde{\beta})}+1)\Phi_k,\frac{b}{2p-1-\tilde{f}(\tilde{\beta})}\Phi_k)|
k=2,3, \cdots,N+1\}$, a $N$-dimensional space. Taking $\theta(\tilde{\beta})=\gamma_-+\frac{2p-1-\tilde{f}(\tilde{\beta})}{b}$, we can check $\theta(\tilde{\beta})\neq 0$. Therefore, there exists a decreasing sequence $\{\beta_k\}$ such that  when $\beta \in (-\sqrt{\mu_1\mu_2},0)\setminus\{\beta_k\}$, the conclusion of Theorem \ref{Th1.4} is true.

At last, we  finish proving   Theorem \ref{Th1}.  If $\tilde{\beta}>0$, we can check  $a-b\gamma_{+}=2p-1.$  So
$$
  (-\Delta)^s(\varphi-\gamma_{+}\psi)+(\varphi-\gamma_{+}\psi)=(2p-1)w^{2p-2}(\varphi-\gamma_{+}\psi),
  $$
and
$$
\varphi-\gamma_{+}\psi=\sum\limits_{j=2}^{N+1}\alpha_j\Phi_j.
$$
Thus,
$$
(-\Delta)^s\psi+\psi =(b\gamma_{+}+c)\psi
w^{2p-2}+\sum\limits_{j=2}^{N+1}b\alpha_j\Phi_jw^{2p-2}.
$$

Set $\psi=\sum\limits_{j=1}^\infty \Gamma_j\Phi_j$
and
\begin{equation}\label{3.3}
\tilde{f}(\tilde{\beta})=b\gamma_{+}+c.
\end{equation}
 By direct computation, we have
 $\tilde{f}(\tilde{\beta})=\frac{\mu(2p-1)+\tilde{\beta}(p-1)\tau_0^p-\tilde{\beta} p \tau_0^{p-2}}{\mu+\tilde{\beta} \tau_0^p}$.

Claim I:~~ $\tilde{f}(\tilde{\beta})\neq 1$  if any one of $(A_i)$ ($i=1,2,\cdots,7$) holds.

We assume that $\tilde{f}(\tilde{\beta})=1$. Then there exist $\tilde{\beta}$ and $\tau_0\in \tilde{D}$ such that
 \begin{equation}
 \left\{%
\begin{array}{ll}
\mu(2p-1)+\tilde{\beta}(p-1)\tau_0^p-\tilde{\beta} p \tau_0^{p-2}=\mu+\tilde{\beta} \tau_0^p,\vspace{2mm}\\
\ds \mu+\tilde{\beta}\tau_0^p-\tilde{\beta}\tau_0^{p-2}-\tau_0^{2p-2}=0.\\
\end{array}%
\right.
\end{equation}
That is,
 \begin{equation}\left\{%
\begin{array}{ll}
p\tau_0^{2p-2}-2\tilde{\beta}\tau_0^p=\mu(2-p),\vspace{2mm}\\
\mu+\tilde{\beta}\tau_0^p-\tilde{\beta}\tau_0^{p-2}-\tau_0^{2p-2}=0,\vspace{2mm}\\
\tau_0\in \tilde{D},
\end{array}%
\right.\end{equation}
which contradicts to Lemma \ref{lm2.1}, since the conditions given in Lemma \ref{lm2.1} correspond to those given in \eqref{1.8} respectively. So the claim I is true.

Claim II: ~~$\tilde{f}(\tilde{\beta})<2p-1$.

Assume  that $\ds \tilde{f}(\tilde{\beta}) \geq 2p-1.$ Then   $0>-\tilde{\beta} p \tau_0^{p-2}\geq p\tilde{\beta}\tau_0^p>0$, which is impossible, since $p, \tau_0, \tilde{\beta}>0$. So the claim II is true.

 Claim I and Claim II imply  that $\tilde{f}(\tilde{\beta})\neq \lambda_k$ for any $k=1,2,\cdots$.  Proceeding as we prove Theorem \ref{Th1.4}, we can complete the proof of Theorem \ref{Th1}.
\end{proof}

\section{the least energy solutions}\label{s3}

  \begin{lemma}\label{lm3.1}
  Assume that $\beta>0$ and $1<p<\frac{2^*_s}{2}, \mu_1,\mu_2>0$. Then

 $(1)~~ S_{\mu_1,\mu_2}=f(\tau_{min})S$

 $(2)~~ S_{\mu_1,\mu_2}$ can be obtained by $(w_{x_0}, \tau_{min}w_{x_0})$ for all $x_0 \in \R^N$, where  $\tau_{min}\ge 0$ is a minimum point of $f(\tau)$ in $[0, +\infty)$ satisfying
 $$
 \tau(\mu_1+\beta\tau^p-\mu_2\tau^{2p-2}-\beta\tau^{p-2})=0,
 $$
 $S, S_{\mu_1,\mu_2}, f(\tau) $ are defined respectively in \eqref{S},\eqref{Smu1mu2} and \eqref{ftau}.

  \end{lemma}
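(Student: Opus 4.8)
The plan is to prove the two inequalities $S_{\mu_1,\mu_2}\le f(\tau_{min})S$ and $S_{\mu_1,\mu_2}\ge f(\tau_{min})S$ separately, and to identify the optimal pair along the way. For the upper bound I would restrict the infimum defining $S_{\mu_1,\mu_2}$ to \emph{proportional} pairs $(u,v)=(\phi,\tau\phi)$ with $\phi\in H^s(\R^N)\setminus\{0\}$ and $\tau\ge 0$. Substituting, the numerator factors as $(1+\tau^2)\int_{\R^N}(1+|\xi|^{2s})|\hat\phi|^2$, while the denominator becomes $(\mu_1+2\beta\tau^p+\mu_2\tau^{2p})^{1/p}\big(\int_{\R^N}|\phi|^{2p}\big)^{1/p}$, so the quotient equals $f(\tau)$ (defined in \eqref{ftau}) times the scale-invariant single-field quotient whose infimum is $S$. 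Choosing $\phi=w_{x_0}$, which is an optimizer of $S$ by the characterization of $w$ in \cite{fls} (using translation and scale invariance of the quotient), and $\tau=\tau_{min}$ then yields exactly $f(\tau_{min})S$, which gives $S_{\mu_1,\mu_2}\le f(\tau_{min})S$.

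For the lower bound, let $(u,v)$ be arbitrary and set $a=\int_{\R^N}|u|^{2p}$, $b=\int_{\R^N}|v|^{2p}$ and $c=\int_{\R^N}|u|^p|v|^p$. Applying the definition \eqref{S} of $S$ to each component gives $\int_{\R^N}(1+|\xi|^{2s})|\hat u|^2\ge S\,a^{1/p}$ and $\int_{\R^N}(1+|\xi|^{2s})|\hat v|^2\ge S\,b^{1/p}$, while the Cauchy--Schwarz inequality yields $c\le (ab)^{1/2}$. Here the hypothesis $\beta>0$ is essential, since it is what allows me to bound the coupling term \emph{from above}: $\mu_1 a+2\beta c+\mu_2 b\le \mu_1 a+2\beta(ab)^{1/2}+\mu_2 b$. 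Assuming first $a,b>0$ and writing $\sigma=(b/a)^{1/(2p)}$, the resulting lower bound for the Rayleigh quotient is precisely $S\,f(\sigma)$, because $a^{1/p}+b^{1/p}=a^{1/p}(1+\sigma^2)$ and $\mu_1 a+2\beta(ab)^{1/2}+\mu_2 b=a(\mu_1+2\beta\sigma^p+\mu_2\sigma^{2p})$. Since $f(\sigma)\ge\min_{\tau\ge 0}f(\tau)=f(\tau_{min})$, every such quotient is $\ge S\,f(\tau_{min})$.

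The degenerate cases $a=0$ (i.e. $u\equiv 0$) and $b=0$ reduce directly to the single-field inequality and give quotients $\ge S\mu_2^{-1/p}=S\,f(+\infty)$ and $\ge S\mu_1^{-1/p}=S\,f(0)$ respectively, both $\ge S\,f(\tau_{min})$, so they cause no difficulty. Combining the two bounds yields $S_{\mu_1,\mu_2}=f(\tau_{min})S$, which is part $(1)$; the pair $(w_{x_0},\tau_{min}w_{x_0})$ constructed in the upper-bound step then attains the infimum, giving part $(2)$. Finally, the stated relation $\tau(\mu_1+\beta\tau^p-\mu_2\tau^{2p-2}-\beta\tau^{p-2})=0$ is exactly the critical-point equation $f'(\tau)=0$ for the one-variable function \eqref{ftau}, which $\tau_{min}$ satisfies as a minimizer on $[0,+\infty)$; I would confirm this by a direct differentiation of $f$ and factoring.

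The only genuinely delicate point is the lower bound, where one must combine the two scalar fractional Sobolev inequalities with the Cauchy--Schwarz estimate for the cross term and then \emph{recognize} the outcome as $S\,f(\sigma)$ after the substitution $\sigma=(b/a)^{1/(2p)}$; the positivity of $\beta$ is precisely what makes this chain of inequalities point in the correct direction, and it is for this reason that the case $\beta<0$ is excluded from the hypotheses.
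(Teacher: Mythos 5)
Your proposal is correct and follows essentially the same route as the paper: test with the proportional pair $(w_{x_0},\tau_{min}w_{x_0})$ for the upper bound, and for the lower bound apply the definition of $S$ to each component and control the cross term by Cauchy--Schwarz (the paper's Young inequality after the normalization $z_n=v_n/\tau_n$ is the same estimate), reducing the Rayleigh quotient to $S\,f(\sigma)\ge S\,f(\tau_{min})$. Your explicit treatment of the degenerate cases $a=0$ or $b=0$ is a small point the paper leaves implicit, but it does not change the argument.
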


\begin{proof}
Since $w$ is the ground state of \eqref{1.3}, we see
\begin{equation}\label{2.4}
S_{\mu_1,\mu_2} \leq \frac{\ds\int_{\R^N}(|\xi|^{2s}+1)(1+\tau_{min}^2)|\hat{w}|^2}
{[(\mu_1+2\beta\tau_{min}^p+\mu_2\tau_{min}^{2p})\ds\int_{\R^N}|w|^{2p}]^{^\frac{1}{p}}}
=f(\tau_{min})S.
\end{equation}

Let $(u_n, v_n)$ be a minimizing sequence for $S_{\mu_1,\mu_2}$, and   $\tau_n$ be a positive constant such that
\begin{equation}\label{2.5}\tau_n^{2p}\ds\int_{\R^N}|u_n|^{2p}=\ds\int_{\R^N}|v_n|^{2p}.
\end{equation}

Set $z_n:=\frac{1}{\tau_n}v_n$. By Young's inequality, we have
\begin{equation}\label{2.6}
\ds\int_{\R^N}|u_n|^p|z_n|^p \leq \frac{1}{2}\ds\int_{\R^N}|u_n|^{2p}+\frac{1}{2}\ds\int_{\R^N}|z_n|^{2p}=\ds\int_{\R^N}|u_n|^{2p}.
\end{equation}
Therefore,
\begin{equation}\label{2.7}\begin{array}{ll}
S_{\mu_1,\mu_2} +o_n(1)&=\frac{\ds\int_{\R^N}(|\xi|^{2s}+1)(|\hat{u_n}|^2+|\hat{v_n}|^2)}
{\Big[\ds\int_{\R^N}(\mu_1|u_n|^{2p}+2\beta|u_n|^p|v_n|^p+\mu_2|v_n|^{2p})\Big]^{^\frac{1}{p}}}\\
&=\frac{\ds\int_{\R^N}(|\xi|^{2s}+1)(|\hat{u_n}|^2+\tau_n^2|\hat{z_n}|^2)}
{\Big[\ds\int_{\R^N}(\mu_1|u_n|^{2p}+2\beta\tau_n^p|u_n|^p|z_n|^p+\mu_2\tau_n^{2p}|z_n|^{2p})\Big]^{^\frac{1}{p}}}\\
&\geq\frac{(1+\tau_n^2)\min\Big\{\ds\int_{\R^N}(|\xi|^{2s}+1)|\hat{u_n}|^2,\ds\int_{\R^N}(|\xi|^{2s}+1)|\hat{z_n}|^2\Big\}}
{\Big(\mu_1+2\beta\tau_n^p+\mu_2\tau_n^{2p}\Big)^\frac{1}{p}\Big(\ds\int_{\R^N}|u_n|^{2p}\Big)^\frac{1}{p}}\\
&\geq f(\tau_n)S\geq f(\tau_{min})S,\\
\end{array}
\end{equation}
where we have used the facts that $\beta>0$ and  $\int_{\R^N}|u_n|^{2p}=\int_{\R^N}|z_n|^{2p}$.

Combining \eqref{2.4} and  \eqref{2.7}, we get
$$S_{\mu_1,\mu_2}=f(\tau_{min})S.$$

Since $w$ is the ground state of \eqref{1.3}, by direct computation, we find that $S_{\mu_1,\mu_2}$ can be obtained by $(w_{x_0}, \tau_{min}w_{x_0}).$ This completes the proof.
\end{proof}

\begin{remark}
In Lemma~\ref{lm3.1}, if $\tau_{\min}=0$, then the minimizer is semi-trivial.
\end{remark}

\begin{lemma}\label{lm3.2}
Assume that $\beta>0,\mu_1,\mu_2>0, 1<p<\frac{2^*_s}{2}$. We have

(1)\,\,if $p>2, 0<\beta\leq (p-1)\mu_2$, then $f(\tau)$ has a unique maximum point $\tau_0>1$ and a unique minimum point $0$;

\quad\quad if $p>2, \beta > (p-1)\mu_2$, then $f(\tau)$ has either a unique maximum point $\tau_0>1$ and a unique minimum point $0$,  or   two local minimum
points  $0, \tau_1$  and  two local maximum points $\tau_2, \tau_3$ satisfying  $0<\tau_2<\tau_1<\tau_3$;

(2)\,\, if $ p=2, \beta>\mu_1$, then $f(\tau)$ has a unique minimum point $\tau_0>1$ and a local maximum point $0$;

\quad\quad if $ p=2, 0<\beta<\mu_2$, then $f(\tau)$ has a unique maximum  point $\tau_0>1$ and a unique  minimum  point $0$;

(3)\,\,if $ 1<p<2, \beta \geq (p-1)\mu_2$, then $f(\tau)$ has a unique minimum   point $\tau_0 \in (0, 1)$ and  a local maximum   point $ 0$;

\quad\quad if $ 1<p<2, 0<\beta < (p-1)\mu_2$, then $f(\tau)$ has either  a unique minimum   point $\tau_0 \in (0, 1)$ and  a local maximum   point $ 0$,  or   two local maximum points
$0 ,\tau_1$ and   two local minimum points $\tau_2, \tau_3$ satisfying  $0<\tau_2<\tau_1<\tau_3$, which implies that $f(\tau)$ has
 a minimum point in $(0, +\infty)$.
\end{lemma}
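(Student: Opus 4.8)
The plan is to reduce the entire statement to a sign analysis of the function $g$ from \eqref{1.10}. Writing $P(\tau)=\mu_1+2\beta\tau^p+\mu_2\tau^{2p}$ (which is strictly positive since $\beta>0$), a logarithmic differentiation of \eqref{ftau} yields the factorization
$$
f'(\tau)=\frac{2\tau\, f(\tau)}{(1+\tau^2)\,P(\tau)}\,g(\tau),\qquad g(\tau)=\mu_1+\beta\tau^p-\mu_2\tau^{2p-2}-\beta\tau^{p-2},
$$
so that on $(0,+\infty)$ the sign of $f'$ coincides with that of $g$; in particular the interior critical points of $f$ are exactly the zeros of $g$, and $f$ increases (resp.\ decreases) precisely where $g>0$ (resp.\ $g<0$). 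I would then record the boundary data $f(0)=\mu_1^{-1/p}$ and $\lim_{\tau\to+\infty}f(\tau)=\mu_2^{-1/p}$, so that $\mu_1>\mu_2$ forces $f(0)<f(+\infty)$, together with the crucial value $g(1)=\mu_1-\mu_2>0$. Thus $\tau=1$ always lies in a region where $f$ is increasing, and classifying the critical points reduces to counting the sign changes of $g$.

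To count them I would differentiate once more: $g'(\tau)=\tau^{p-3}h(\tau)$ with $h(\tau)=\beta p\tau^2-(2p-2)\mu_2\tau^p-\beta(p-2)$. Since $h'(\tau)=p\tau\bigl(2\beta-(2p-2)\mu_2\tau^{p-2}\bigr)$ vanishes at the single positive point $\tau^\ast=\bigl(\tfrac{\beta}{(p-1)\mu_2}\bigr)^{1/(p-2)}$, the function $h$ is unimodal (a maximum when $p>2$, a minimum when $1<p<2$) and hence has at most two positive zeros; consequently $g$ has at most two critical points and therefore at most three zeros. The whole analysis is then driven by the two elementary evaluations
$$
h(1)=2\bigl[\beta-(p-1)\mu_2\bigr],\qquad h(\tau^\ast)=\beta(p-2)\bigl[(\tau^\ast)^2-1\bigr],
$$
which show that the threshold $\beta=(p-1)\mu_2$ exactly decides whether $h$ — and hence $g'$ — keeps a constant sign, and that $\tau^\ast\lessgtr 1$ according to $\beta\lessgtr(p-1)\mu_2$.

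With this in hand each item follows by bookkeeping. For $p>2$ (item (1)): if $0<\beta\le(p-1)\mu_2$ then $h\le 0$, so $g$ decreases monotonically from $\mu_1$ to $-\infty$ and has a single zero $\tau_0$, which satisfies $\tau_0>1$ because $g(1)>0$, giving a unique maximum of $f$ at $\tau_0$ and a minimum at $0$; if $\beta>(p-1)\mu_2$ then $h$ changes sign twice and $g$ acquires one local minimum and one local maximum, so that according to whether the local minimum value of $g$ is nonnegative or negative one obtains exactly one zero (a single maximum of $f$) or exactly three zeros whose alternating sign pattern produces minima at $0,\tau_1$ and maxima at $\tau_2,\tau_3$. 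The case $1<p<2$ (item (3)) is dual: here $g(0^+)=-\infty$, $g(+\infty)=+\infty$ and $h$ is unimodal with a minimum, the same threshold $(p-1)\mu_2$ separating the monotone situation (one zero $\tau_0\in(0,1)$, a minimum of $f$, with $0$ a local maximum) from the situation with three zeros giving two local maxima $0,\tau_1$ and two local minima $\tau_2,\tau_3$. Finally $p=2$ (item (2)) is immediate: $g(\tau)=(\mu_1-\beta)+(\beta-\mu_2)\tau^2$ is an explicit expression in $\tau^2$ whose unique positive root and its position relative to $1$ are read off directly from the signs of $\mu_1-\beta$ and $\beta-\mu_2$.

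The main obstacle is the bookkeeping in the borderline regimes $\beta>(p-1)\mu_2$ (and its $1<p<2$ counterpart $0<\beta<(p-1)\mu_2$): one must rule out the configurations that are \emph{a priori} permitted by ``$g$ has at most three zeros'' but are incompatible with $g(1)>0$, and then verify both the ordering $0<\tau_2<\tau_1<\tau_3$ and the correct labelling of the endpoint $0$ as a minimum or maximum according to the sign of $g$ just to the right of the origin. The identity $h(1)=2[\beta-(p-1)\mu_2]$ localizes $\tau=1$ inside the interval on which $h>0$, and this is exactly what forces the stated orderings and discards the spurious cases, so that in each subcase only the two listed shapes of $f$ can occur.
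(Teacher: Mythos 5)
Your proposal is correct and follows essentially the same route as the paper's Appendix: the factorization $f'(\tau)=2\tau g(\tau)H^{-1/p-1}(\tau)$, the reduction to the sign of $g$ via $g'(\tau)=\tau^{p-3}h(\tau)$ and the unimodality of $h$ coming from $h'(\tau)=2p\tau(\beta-(p-1)\mu_2\tau^{p-2})$, with the same threshold $\beta=(p-1)\mu_2$ and the same case bookkeeping. Your explicit identities $h(1)=2[\beta-(p-1)\mu_2]$ and $h(\tau^\ast)=\beta(p-2)[(\tau^\ast)^2-1]$ make the placement of $\tau=1$ and the sign of $h$ at its extremum slightly more transparent than the paper's tables, but the argument is the same.
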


The proof of Lemma~\ref{lm3.2} is very preliminary and  we postpone it to the Appendix.

\begin{proof}[ Proof of Theorem \ref{th1.6}:]
From Lemma \ref{lm3.2}, we know that $f(\tau)$ admits a minimum point $\tau_{min}>0$ under the assumptions of Theorem \ref{th1.6}. Hence  $(k_{min}w_{x_0}, k_{min}\tau_{min}w_{x_0})$  is a positive least energy solution of \eqref{1.1}, where $k_{min}>0$ satisfies $(\mu_1+\beta\tau_{min}^p)k_{min}^{2p-2}=1$. If we can prove that any positive least energy solution $(u_0, v_0)$ of \eqref{1.1} must be of the form $(k_{min}w_{x_0}, k_{min}\tau_{min}w_{x_0})$ and $\tau_{min}$ must be unique, then we complete the proof of Theorem \ref{th1.6}.

Firstly, we prove that
\begin{equation}\label{3.9}
\ds\int_{\R^N}|u_0|^{2p}=k_{min}^{2p}\ds\int_{\R^N}|w|^{2p}.
\end{equation}

To this end, we study the following equation with a parameter $\mu>0$
\begin{equation}\label{3.10}\left\{\begin{array}{ll}
(-\Delta)^su+u=\mu|u|^{2p-2}u+\beta|v|^p|u|^{p-2}u, x\in \R^N,\vspace{2mm}\\
(-\Delta)^sv+v=\mu_2|v|^{2p-2}v+\beta|u|^p|v|^{p-2}v, x\in \R^N,\vspace{2mm}\\
u,v \in H^s(\R^N).
\end{array}
\right.
\end{equation}

 Similarly, we denote
 $$f_1(\tau)=\frac{1+\tau^2}{(\mu+2\beta\tau^p+\mu_2\tau^{2p})^\frac{1}{p}},\,\,\,\, f_1(\tilde{\tau}_{min})=\min\limits_{\tau \geq 0}f_1(\tau).$$

 Since $0<\tau_{min}$ is a local minimum point of $f(\tau)$, then $f^\prime(\tau_{min})=0$ and  $f^{\prime\prime}(\tau_{min})>0$. From this, we  get $g(\tau_{min})=0$
  and  $g^{\prime}(\tau_{min})>0.$
 Let
 $$
 F(\mu, \tau)=\mu+\beta\tau^p-\beta\tau^{p-2}-\mu_2\tau^{2p-2}.
 $$
  Then
 \begin{eqnarray*}
 && F(\mu_1, \tau_{min})=g(\tau_{min})=0,\\
 &&\frac{\partial F(\mu, \tau)}{\partial \mu}=1,\\
 &&\frac{\partial F(\mu, \tau)}{\partial \tau}=\beta p\tau^{p-1}-\beta(p-2)\tau^{p-3}-\mu_2(2p-2)\tau^{2p-3},\\
 &&\frac{\partial F(\mu, \tau)}{\partial \tau}|_{_{_{(\mu, \tau)=(\mu_1, \tau_{min})}}}=g^{\prime}(\tau_{min})>0.
 \end{eqnarray*}
 By the Implicit Function Theorem, we can find $\varepsilon, \delta >0$ and two positive  functions $\tilde{k}_{min}(\mu), \tilde{\tau}_{min}(\mu)\in C^1((\mu_1-\varepsilon, \mu_1+\varepsilon),~(\tau_{min}-\delta, \tau_{min}+\delta))$ such that
 $F(\mu, \tilde{\tau}_{min})\equiv 0$ in $(\mu_1-\varepsilon, \mu_1+\varepsilon)$ and $(\mu+\beta\tilde{\tau}_{min}^p)\tilde{k}_{min}(\mu)^{2p-2}\equiv1$ in $(\mu_1-\varepsilon, \mu_1+\varepsilon)$. That is,
 $$\mu+\beta\tilde{\tau}_{min}^p-\beta\tilde{\tau}_{min}^{p-2}-\mu_2\tilde{\tau}_{min}^{2p-2}\equiv 0~\hbox{in}~\mu \in(\mu_1-\varepsilon, \mu_1+\varepsilon).$$
 Moreover, the least energy
 $B(\mu)=\tilde{k}_{min}^2(\mu)(1+ \tilde{\tau}^2_{min}(\mu))B_1 \in C^1((\mu_1-\varepsilon, \mu_1+\varepsilon),\R),$
 where $B_1=\frac{p-1}{2p}\int_{\R^N}|w|^{2p}$.

 By direct computation we can have
 \begin{equation}\label{3.10}
 B(\mu)=\inf\limits_{(u,v)\in(H^s(\R^N)\setminus\{0\})^2}\max\limits_{t>0}I_\mu(tu, tv)
 \end{equation}
 where
 $$
 I_\mu(u,v):=\frac{1}{2}\ds\int_{\R^N}(1+|\xi|^{2s})(|\hat{u}|^2+|\hat{v}|^2)-\frac{1}{2p}\ds\int_{\R^N}
 (\mu|u|^{2p}+2\beta|u|^p|v|^p+\mu_2|v|^{2p}).
 $$

 Define
 $$
 A:=\ds\int_{\R^N}(1+|\xi|^{2s})(|\hat{u}_0|^2+|\hat{v}_0|^2),\quad B:=\ds\int_{\R^N}
 (2\beta|u_0|^p|v_0|^p+\mu_2|v_0|^{2p}),\quad E:=\ds\int_{\R^N}|u_0|^{2p}.
 $$
 By direct computation, there exists a unique $t(\mu)>0$ such that
 $$\max\limits_{t>0}I_\mu(tu_0, tv_0)=I_\mu(t(\mu)u_0, t(\mu)v_0),$$
 where $t(\mu)=(\frac{A}{E\mu+B})^\frac{1}{2p-2}$.

 Let $H(\mu,t):=(E\mu+B)t^{2p-2}-A$. Then $H(\mu_1,1)=0, \frac{\partial H}{\partial t}(\mu_1, 1)>0$.
 By the Implicit Function Theorem, there exist $t(\mu), \varepsilon_1 \in(0, \varepsilon)$ such that
 $t(\mu) \in C^1((1-\varepsilon_1, 1+\varepsilon_1),\R)$ and
 $$t^\prime(\mu_1)=-\frac{E}{2(p-1)(E\mu_1+B)}.$$

 By Taylor expansion, we see that $t(\mu)=1+t^\prime(\mu_1)(\mu-\mu_1)+O((\mu-\mu_1)^2)$ and so
 $t^2(\mu)=1+2t^\prime(\mu_1)(\mu-\mu_1)+O((\mu-\mu_1)^2)$.
 By the fact that $(u_0, v_0)$ is a positive  least energy  solution of \eqref{1.1}, we have
 $$B(\mu_1)=\frac{p-1}{2p}A=\ds\frac{p-1}{2p}(E\mu_1+B).$$
Then using \eqref{3.10}, we get
\begin{equation}\label{3.11}\begin{array}{ll}
B(\mu)&\leq I_\mu(t(\mu)u_0, t(\mu)v_0)=\ds\frac{p-1}{2p}At(\mu)^2=B(\mu_1)t(\mu)^2\vspace{2mm}\\
&=B(\mu_1)-\ds\frac{E}{2p}(\mu-\mu_1)+O((\mu-\mu_1)^2).
\end{array}
\end{equation}
It follows that
$$\frac{B(\mu)-B(\mu_1)}{\mu-\mu_1} \geq -\frac{E}{2p}+O((\mu-\mu_1))$$
as $\mu \nearrow \mu_1$ and so $B^\prime(\mu_1)\geq-\frac{E}{2p}.$
Similarly, we have
$$\frac{B(\mu)-B(\mu_1)}{\mu-\mu_1} \leq -\frac{E}{2p}+O((\mu-\mu_1))$$
as $\mu \searrow \mu_1$, which means   $B^\prime(\mu_1)\leq-\frac{E}{2p}.$

Therefore,
$$
B^\prime(\mu_1)=-\frac{E}{2p}=-\frac{1}{2p}\ds\int_{\R^N}|u_0|^{2p}.
$$
Moreover, since $(k_{min}w, k_{min}\tau_{min}w)$ is  a positive  least energy solution of \eqref{1.1}, we have
$B^\prime(\mu_1)=-\frac{k_{min}^{2p}}{2p}\int_{\R^N}|w|^{2p}.$ So we have
\begin{equation}\label{3.12}
\ds\int_{\R^N}|u_0|^{2p}=k_{min}^{2p}\ds\int_{\R^N}|w|^{2p}.
\end{equation}
Moreover, we claim that $\tau_{min}$ is unique. In fact, suppose, to the contrary, that there exist two minimum points  $\tau^1_{min}\neq \tau^2_{min}$.
We have $k^1_{min}\neq k^2_{min}$. From the above proof, we deduce
$$
-\frac{(k^2_{min})^{2p}}{2p}\ds\int_{\R^N}|w|^{2p}=B^\prime(\mu_1)=-\frac{(k^1_{min})^{2p}}{2p}\ds\int_{\R^N}|w|^{2p},
$$
which contradicts to the fact that $k^1_{min}\neq k^2_{min}$. So $\tau_{min}$ must be unique.

With the   similar argument, we can show that
\begin{equation}\label{3.13}
\ds\int_{\R^N}|v_0|^{2p}=k^{2p}_{min}\tau_{min}^{2p}\ds\int_{\R^N}|w|^{2p},~~
\ds\int_{\R^N}|u_0|^p|v_0|^p=k^{2p}_{min}\tau_{min}^p\ds\int_{\R^N}|w|^{2p}.
\end{equation}

 Since $(k_{min}w, k_{min}\tau_{min}w)$ is a positive  least energy solution of \eqref{1.1}, we have
 \begin{equation}\label{3.14}
 (\mu_1+\beta\tau_{min}^p)k_{min}^{2p-2}=1=(\mu_2\tau_{min}^{2p-2}+\beta\tau_{min}^{p-2})k_{min}^{2p-2}.
 \end{equation}

Set $(u_1, v_1):=(\frac{u_0}{k_{min}}, \frac{v_0}{k_{min}\tau_{min}})$. It follows from  \eqref{3.12} and \eqref{3.14} that
$$\ds\int_{\R^N}(1+|\xi|^{2s})|\hat{u}_1|^2=\ds\int_{\R^N}|u_1|^{2p}.$$
Similarly, we find
$$\ds\int_{\R^N}(1+|\xi|^{2s})|\hat{v}_1|^2=\ds\int_{\R^N}|v_1|^{2p}.$$
Since $w$ is the ground state of \eqref{1.3},  we have
$$\ds\int_{\R^N}(1+|\xi|^{2s})|\hat{u}_1|^2\geq\ds\int_{\R^N}(1+|\xi|^{2s})|\hat{w}|^2,$$
and
$$\ds\int_{\R^N}(1+|\xi|^{2s})|\hat{v}_1|^2\geq\ds\int_{\R^N}(1+|\xi|^{2s})|\hat{w}|^2.$$

Noticing that $(u_0, v_0)$ and $(k_{min}w, k_{min}\tau_{min}w)$ are the least energy solutions of \eqref{1.1}, we obtain
\begin{eqnarray*}
&&\ds\frac{p-1}{2p}k^2_{min}(1+\tau^2_{min})\ds\int_{\R^N}(1+|\xi|^{2s})|\hat{w}|^2\vspace{2mm}\\
&=&\ds\frac{p-1}{2p}\ds\int_{\R^N}(1+|\xi|^{2s})(|\hat{u}_0|^2+|\hat{v}_0|^2)\\
&=&\ds\frac{p-1}{2p}\ds\int_{\R^N}(1+|\xi|^{2s})(k^2_{min}|\hat{u}_1|^2+k^2_{min}\tau^2_{min}|\hat{v}_1|^2)\vspace{2mm}\\
&\geq&\ds\frac{p-1}{2p}k^2_{min}(1+\tau^2_{min})\ds\int_{\R^N}(1+|\xi|^{2s})|\hat{w}|^2,
\end{eqnarray*}
which implies that
$$\ds\int_{\R^N}(1+|\xi|^{2s})|\hat{u}_1|^2 =\ds\int_{\R^N}(1+|\xi|^{2s})|\hat{w}|^2,$$
and
$$\ds\int_{\R^N}(1+|\xi|^{2s})|\hat{v}_1|^2=\ds\int_{\R^N}(1+|\xi|^{2s})|\hat{w}|^2.$$
So $u_1$ and $v_1$ are both positive least energy solutions of \eqref{1.3}.

By H\"{o}lder inequality, \eqref{3.12} and \eqref{3.13}, we see
\begin{equation}\label{H}
\begin{array}{ll}
\ds\int_{\R^N}w^{2p}&=\ds\int_{\R^N}|u_1|^{p}|v_1|^p\le \ds\frac12\ds\int_{\R^N}|u_1|^{2p}+\ds\frac12\ds\int_{\R^N}|v_1|^{2p}\vspace{2mm}\\
&=\ds\frac12\ds\int_{\R^N}w^{2p}+\ds\frac12\ds\int_{\R^N}w^{2p}\vspace{2mm}\\
&=\ds\int_{\R^N}w^{2p}.
\end{array}
\end{equation}

Hence the inequality in \eqref{H} is in fact an equality, which implies  $u_1=v_1$.

\end{proof}

\begin{remark}\label{re3.4}
From Lemma~\ref{lm3.2} and the proof of Theorem~\ref{th1.6}, we see

(1)\,\, Both  non-zero minimizers and non-zero maximizers of $f(\tau)$ correspond  positive proportional vector solutions of problem~\eqref{1.1}.

(2)\,\, In the cases $p>2, 0<\beta\leq (p-1)\mu_2$ and  $ p=2, 0<\beta<\mu_2$, problem~\eqref{1.1} admits no positive least energy solutions.

(3)\,\, When $p>2$ and $ \beta > (p-1)\mu_2$, $f(\tau)$ may have two local minimum points $0,\tau_1$. If we can prove $f(\tau_1) \leq f(0)$, then \eqref{1.1} has a unique positive least energy solution.

\end{remark}

\section{Appendix}\label{apx}
\begin{proof} [\bf Proof of Lemma \ref{lm3.2}:]
Set
$$H(\tau):=\mu_1+2\beta\tau^p+\mu_2\tau^{2p},$$
 and
 $$h(\tau):=\beta p\tau^2-\mu_22(p-1)\tau^p-\beta(p-2).$$
Since $\mu_1,\mu_2>0$, for $\beta>0$,  we have
 \begin{equation}\label{2.8}
 H(\tau)>0~\hbox{ for~ all}~ \tau \in (0,+\infty).
 \end{equation}
 By direct computation, we have
 \begin{equation}\label{2.9}
 f^\prime(\tau)=\frac{2\tau g(\tau)}{H^{\frac{1}{p}+1}(\tau)},
 \end{equation}

 \begin{equation}\label{2.10}
 g^\prime(\tau)=\tau^{p-3}h(\tau)
 \end{equation}
and
\begin{equation}\label{2.11}
h^\prime(\tau)=2p\tau(\beta-\mu_2(p-1)\tau^{p-2}).
\end{equation}

Combining Lemma \ref{lm2.3}, \eqref{2.8}--\eqref{2.11} and the fact that $\lim\limits_{\tau \to +\infty}f(\tau)=\mu_2^{-\frac{1}{p}}>\mu_1^{-\frac{1}{p}}=f(0)$,
we proceed the following discussion.

Case I: $\beta>0, p>2$

From \eqref{2.11}, we see that $h^\prime (\tau)=0$ has a unique positive solution $ \tau_2=(\frac{\beta}{\mu_2(p-1)})^\frac{1}{p-2}$,  and  $h^{\prime\prime} (\tau)=0$ has a unique positive solution $ \tau_1=(\frac{\beta}{\mu_2(p-1)^2})^\frac{1}{p-2}$, $\tau_1<\tau_2$. We have the following two subcases.

\quad\quad $(I_1): 0<\beta \leq (p-1)\mu_2$

In the case, $h(\tau_2) \leq 0$. We find
\begin{table}[!hbp]
\begin{tabular}{|c|c|c|c|c|c|c|c|}
\hline
 & $(0,\tau_1)$ &$\tau_1$ & $(\tau_1, \tau_2)$  & $\tau_2$& $(\tau_2, +\infty)$\\
 $h^{\prime\prime} (\tau)$ & $>0$ &  $=0$  &  $<0$ & $<0$& $<0$ \\
\hline
$h^\prime (\tau)$ & $>0$ &  $>0$  &  $>0$ & $=0$& $<0$ \\
\hline
$h(\tau)$ & $<0$ &  $<0$  &  $<0$ &  $\leq0$& $<0$\\
\hline
$g^\prime(\tau)$ & $<0$ &  $<0$  &  $<0$ &  $\leq0$& $<0$\\

\hline
\end{tabular}
\caption{$p>2$}
\end{table}

Considering that $g(\tau)=0$ has a positive solution $\tau_0$, we get the following table
\\
\\
 \begin{table}[!hbp]
\begin{tabular}{|c|c|c|c|c|}

\hline
 & $(0,\tau_0)$ & $\tau_0$ & $(\tau_0, +\infty)$  \\
\hline
$g(\tau)$ & $>0$ & $=0$ &  $<0$   \\
\hline
$f^\prime(\tau)$ & $>0$ & $=0$ &  $<0$  \\
\hline
\end{tabular}
\caption{$p>2$}
\end{table}

So $f(\tau)$ has a unique maximum point $\tau_0>1$ and a unique minimum point $0$.\\

\quad\quad $(I_2): \beta > (p-1)\mu_2$

In this case,  $h(\tau_2)>0$. Combining  the facts that $\lim\limits_{\tau \to 0^+}h(\tau)<0~\hbox{and}~
 \lim\limits_{\tau +\infty}h(\tau)=-\infty$ with the following table,  then $ h(\tau)=0$ has only two solution
$\tau_3, \tau_4$.
\\
\\
\\
\\
\begin{table}[!hbp]
\begin{tabular}{|c|c|c|c|c|c|c|c|}

\hline
 & $(0,\tau_1)$ &$\tau_1$ & $(\tau_1, \tau_2)$  & $\tau_2$& $(\tau_2, +\infty)$\\
 $h^{\prime\prime} (\tau)$ & $>0$ &  $=0$  &  $<0$ & $<0$& $<0$ \\

 \hline
$h^\prime (\tau)$ & $>0$ &  $>0$  &  $>0$ & $=0$& $<0$ \\
\hline
\end{tabular}
\caption{$p>2$}
\end{table}

So we have

\begin{table}[!hbp]
\begin{tabular}{|c|c|c|c|c|c|c|c|}

\hline
 & $(0,\tau_3)$ &$\tau_3$ & $(\tau_3, \tau_4)$  & $\tau_4$& $(\tau_4, +\infty)$\\
 $h (\tau)$ & $<0$ &  $=0$  &  $>0$ & $=0$& $<0$ \\
\hline

$g^\prime (\tau)$ & $<0$ &  $=0$  &  $>0$ & $=0$& $<0$ \\
\hline
\end{tabular}
\caption{$p>2$}
\end{table}
From the above table we can see that $g(\tau)=0$ has at most three solutions. If $g(\tau)=0$ has one solution or two solutions, then we can find a solution $\tilde{\tau}_1>0$ of $f^\prime (\tau)=0$ such that
$f(\tau)$  increases strictly in $(0, \tilde{\tau}_1)\setminus\{\tilde{\tau}_2\}$ and $f(\tau)$  decreases strictly  in $(\tilde{\tau}_1, +\infty)\setminus\{\tilde{\tau}_2\}$,
where $\tilde{\tau}_2$ is the other root of $f^\prime (\tau)=0$ if exists.Therefore,  we can see  that $f(\tau)$ has a unique maximum point $\tau_0>1$ and a unique minimum point $0$. Now  we   study the case that $g(\tau)=0$ has  three solutions $\tau_5,\tau_6,\tau_7$.
From the following  table we can see that $\tau_5, \tau_7$ are the local maximum points of $f(\tau)$ and $0, \tau_6$ are the local minimum points of $f(\tau)$.
\\
\\
\\
\\
\\

\begin{table}[!hbp]
\begin{tabular}{|c|c|c|c|c|c|c|c|c|c|}

\hline
 & $(0,\tau_5)$ &$\tau_5$ & $(\tau_5, \tau_6)$  & $\tau_6$& $(\tau_6, \tau_7)$& $\tau_7$ & $(\tau_7, +\infty)$\\
 $g (\tau)$&$>0$&$=0$ & $<0$ &  $=0$  &  $>0$ & $=0$& $<0$ \\
\hline

$f^\prime (\tau)$ &$>0$&$=0 $& $<0$ &  $=0$  &  $>0$ & $=0$& $<0$ \\
\hline
\end{tabular}
\caption{$p>2$}
\end{table}

Case II: $\beta>0, p=2$

We see $g(\tau)=(\mu_1-\beta)+(\beta-\mu_2)\tau^2$.

\quad  $(II_1):$~~ $0<\beta< \mu_2$

 We have

 \begin{table}[!hbp]
\begin{tabular}{|c|c|c|c|c|}
\hline
 & $(0,\tau_0)$ & $\tau_0=\sqrt{\frac{\mu_1-\beta}{\mu_2-\beta}}$ & $(\tau_0, +\infty)$  \\
\hline
$g(\tau)$ & $>0$ & $=0$ &  $<0$   \\
\hline
$f^\prime(\tau)$ & $>0$ & $=0$ &  $<0$  \\
\hline
\end{tabular}
\caption{$0<\beta< \mu_2,p=2$}
\end{table}
From this, we can see that  $f(\tau)$ has a unique maximum point $\tau_0>1$ and a unique minimum point $0$.

\quad $(II_2):$~~~ $\beta> \mu_1$

If $\beta> \mu_1$, we can obtain

 \begin{table}[!hbp]
\begin{tabular}{|c|c|c|c|c|}
\hline
 & $(0,\tau_0)$ & $\tau_0=\sqrt{\frac{\mu_1-\beta}{\mu_2-\beta}}$ & $(\tau_0, +\infty)$  \\
\hline
$g(\tau)$ & $<0$ & $=0$ &  $>0$   \\
\hline
$f^\prime(\tau)$ & $<0$ & $=0$ &  $>0$  \\
\hline
\end{tabular}
\caption{$\beta> \mu_1, p=2$}
\end{table}
So we also can get that $f(\tau)$ has a unique minimum point $\tau_0>1$ and a local  maximum point $0$
in $[0, +\infty)$.

Case III: $\beta>0, 1<p<2$

By direct computation,
we find that  $h^{\prime}(\tau)=0$ has a unique positive solution $ \tau_2=(\frac{\beta}{\mu_2(p-1)})^\frac{1}{p-2}$,
$h^{\prime}(\tau)<0$ in $(0, \tau_2)$ and $h^{\prime}(\tau)>0$ in $(\tau_2, +\infty)$.

\quad $(III_1):$ $\beta \geq (p-1)\mu_2$

Direct computation yields $h(\tau) >0$ for $\tau \in (0,+\infty)\setminus\{\tau_2\}$ and $g^\prime (\tau) >0$ for $\tau \in (0,+\infty)\setminus\{\tau_2\}.$
So $g(\tau)$ increases in $(0,+\infty)$. Due to the fact that $\lim\limits_{\tau \to 0^+}g(\tau)=-\infty, g(1)>0,$ we deduce that $g(\tau)=0$ has a unique positive solution $\tau_0<1$,
$g(\tau)<0 $ in $(0, \tau_0)$ and $g(\tau)>0 $ in $(\tau_0, +\infty)$.  Therefore we have proved that $f(\tau)$ has a unique minimum point $\tau_0<1$ and a local maximum point $0$.

\quad $(III_2):$ $0<\beta<(p-1)\mu_2$

We see
$h(\tau_2)<0$. So $h(\tau)=0$ has two roots $\tau_3, \tau_4$. We have the following table.
\\
\\
\\
\begin{table}[!hbp]
\begin{tabular}{|c|c|c|c|c|c|c|c|}
\hline
 & $(0,\tau_3)$ &$\tau_3$ & $(\tau_3, \tau_4)$  & $\tau_4$& $(\tau_4, +\infty)$\\
 $h (\tau)$ & $>0$ &  $=0$  &  $<0$ & $=0$& $>0$ \\
\hline

$g^\prime (\tau)$ & $>0$ &  $=0$  &  $<0$ & $=0$& $>0$ \\
\hline
\end{tabular}
\caption{$1<p<2$}
\end{table}

From the above table, we can see that  $g(\tau)=0$ and hence $f^\prime (\tau)=0$  have at most three solutions.  If $f^\prime (\tau)=0$ has at most two  solutions, then we can find a solution $\tilde{\tau}_1>0$ of $f^\prime (\tau)=0$ such that
$f(\tau)$  decreases strictly in $(0, \tilde{\tau}_1)\setminus\{\tilde{\tau}_2\}$ and $f(\tau)$  increases strictly  in $(\tilde{\tau}_1, +\infty)\setminus\{\tilde{\tau}_2\}$,
where $\tilde{\tau}_2$ is the other root of $f^\prime (\tau)=0$ if exists.
 So $f(\tau)$ has a unique minimum point $\tau_0<1$ and a local maximum point $0$.
 Now we study the case that $f^\prime (\tau)=0$ has three solutions $\tau_5, \tau_6, \tau_7$.
\\
\begin{table}[!hbp]
\begin{tabular}{|c|c|c|c|c|c|c|c|c|c|}

\hline
 & $(0,\tau_5)$ &$\tau_5$ & $(\tau_5, \tau_6)$  & $\tau_6$& $(\tau_6, \tau_7)$& $\tau_7$ & $(\tau_7, +\infty)$\\
 $g (\tau)$&$<0$&$=0$ & $>0$ &  $=0$  &  $<0$ & $=0$& $>0$ \\
\hline

$f^\prime (\tau)$ &$<0$&$=0 $& $>0$ &  $=0$  &  $<0$ & $=0$& $>0$ \\
\hline
\end{tabular}
\caption{$p>2$}
\end{table}

From the above table,  we see that $\tau_5, \tau_7$ are the local minimum points of $f(\tau)$, and
$0, \tau_6$ are the local maximum points of $f(\tau)$ and $f(\tau_5)<f(0)$. Therefore  $f(\tau)$ has
a minimum point and $\min\{f(\tau): \tau \geq 0\}<f(0)$.

\end{proof}

\end{document}